\newtheorem{thm}{Theorem}[section]
\newtheorem{lemma}[thm]{Lemma}
\newtheorem{proposition}[thm]{Proposition}
\newtheorem{corollary}[thm]{Corollary}
\newtheorem{definition}[thm]{Definition}
\theoremstyle{remark}
\newtheorem{remark}{Remark}[section]
\newtheorem*{ack}{Acknowledgement}
\begin{document}
\bibliographystyle{abbrv}

\title[Mass of asymptotically hyperbolic spaces]{Mass of $C^0$-asymptotically hyperbolic spaces  via the normalized Ricci-DeTurck flow}
\author{Yuqiao Li}
\address{Department of Mathematics, Hefei University of Technology, Hefei, 230009, P.R.China.} 
\email{lyq112@mail.ustc.edu.cn}
\thanks{Mathematics Subject Classification: 53C25.53E20.53Z05}
\thanks{Keywords: mass, asymptotically hyperbolic manifolds, normalized Ricci flow}
\thanks{This paper is supported by Tian Yuan Mathematical Foundation of No. 12526577.}
\maketitle

\numberwithin{equation}{section}

\begin{abstract}
    We define a mass function on asymptotically hyperbolic manifolds with continuous metrics via the normalized Ricci–DeTurck flow. 
   This definition coincides with the classical mass for $C^2$ metrics. We also introduce the scalar curvature lower bound for continuous metrics -- a key component in establishing the well-definedness of the $C^0$ mass.
\end{abstract}

\vspace{.2in}

\section{Introduction}
Mass is a crucial quantity on asymptotically hyperbolic manifolds in general relativity. Wang defined the mass and established the positive mass theorem for conformally compact manifolds that are asymptotically  hyperbolic  in a specific sense \cite{Wang01}.
Chrusciel and Herzlich later extended  the mass definition to  more general asymptotically hyperbolic manifolds \cite{Chrusciel03}.
Both Wang's proof and that of Chrusciel-Herzlich for the positive mass theorem on asymptotically hyperbolic manifolds rely on spin structures. 
In three dimensions, Sakovich provided a non-spin proof of the positive mass theorem in the asymptotically hyperbolic setting using the Jang equation  \cite{jang21}.

In this paper, we focus on the mass of asymptotically hyperbolic manifolds with non-smooth metrics.
For a three-dimensional asymptotically hyperbolic initial data set with $W^{1, 2}$ metric, the positive energy theorem was established by Wang and Zhang under nonnegative distributional curvature  \cite{wangzh19}.
 Gicquaud and Sakovich  recently defined the mass aspect function for weakly regular asymptotically hyperbolic manifolds with $W^{1, 2}_{loc}\bigcap L^{\infty}$ metrics by means of cut-off functions   \cite{gicquaud2025}.
Analogously, Lundgren and Meco adopted the same approach to define the ADM mass for asymptotically Euclidean manifolds with $W^{1, 2}_{loc}\bigcap L^{\infty}$ metrics  \cite{lundgren2025}.

On the other hand, Burkhardt-Guim used the Ricci-DeTurck flow to define the weak ADM mass for $C^0$ asymptotically flat metrics, showing that it coincides with the standard ADM mass whenever the latter exists  \cite{MR4685089}.
We thus investigate the mass function on asymptotically hyperbolic manifolds with $C^0$ metrics via the normalized Ricci-DeTurck flow in this work.
Bahuaud  showed that the normalized Ricci flow of conformally compact metrics exists for a short time interval and preserves conformally compactness \cite{Bahu11}. 
Qing-Shi-Wu  established the long-time existence and convergence of the normalized Ricci flow on conformally compact asymptotically hyperbolic manifolds \cite{QingShiWu13}.
Subsequently, Balehowsky and Woolgar proved that the mass of an asymptotically hyperbolic manifold decays exponentially to zero under the normalized Ricci flow \cite{RFofah12}.

Firstly, we define a quantity called $C^0$ local mass function by means of a cutoff function. Particularly, the limit of this local mass is the classical mass defined by Chrusciel-Herzlich when the metric is $C^2$ asymptotically hyperbolic. 
Let $(\mathbb{H}^n, b)$ be the standard hyperbolic manifold.

\begin{definition}\label{mac0}
   Let $(M,g)$ be a $C^0_{\tau}$-asymptotically hyperbolic manifold with a chart at infinity $\Phi:M\backslash K\rightarrow \mathbb{H}^n\backslash K'$ for some compact subsets $K\subset M$, $K'\subset\mathbb{H}^n$ and $\tau>0$ as in Definition \ref{def2.2}.
Let $r>0$ such that the annulus $A(0, 0.9r, 1.1r)=B(1.1r)\backslash\bar{B}(0.9r)\subset \mathbb{H}^n\backslash K'$.
Let $\varphi: \mathbb{R}\rightarrow\mathbb{R}$ be a smooth function with $\int_{0.9}^{1.1}\varphi(l)dl\not=0$. 
Writing $\Phi_*g=g$ and $e=\Phi_*g-b$, we define the $C^0$ local mass function of $g$ with respect to $\varphi$ and $\Phi$ at $r$ by 
\begin{align*}
   & M_{C^0}(g, \Phi, \varphi, r)\\
    :=& \frac{1}{r\int_{0.9}^{1.1}\varphi(l)dl}\bigg[ \int_{\partial A(0, 0.9r, 1.1r)}\cosh s\varphi\left(\frac{s}{r}\right)(e_{ij}\nu^i\nu^j-b^{ij}e_{ij})d\mu_b\\
   & + \int_{A(0, 0.9r, 1.1r)}\left[ \cosh s\varphi'\left( \frac{s}{r} \right)r^{-1}+\varphi\left( \frac{s}{r} \right)\left(n\sinh s +\frac{n-2}{\sinh s}\right) \right]b^{ij}e_{ij}d\mu_b\\
   & + \int_{A(0, 0.9r, 1.1r)}\left[ \varphi\left(\frac{s}{r}\right)\left(\frac{1}{\sinh s}-\sinh s\right)-\cosh s\varphi'\left( \frac{s}{r} \right)r^{-1} \right]\frac{x^ix^j}{\sinh^2 s}e_{ij}d\mu_b \bigg],
\end{align*}
where $x^i$ denotes the coordinate component and $s=|x|_b$ is the geodesic distance.
\end{definition}

We obtain this $C^0$ local mass function by integration by parts which can be seen  in details in Section 2.
Since the Ricci-DeTurck flow exists for a short time with continuous initial metric which is close to the hyperbolic metric due to Simon's theorem \cite{MSimon}, we can define the scalar curvature lower bound via the normalized Ricci-DeTurck flow.

\begin{definition}
    Let $M^n$ be a smooth manifold and $g$ be a $C^0$ Riemannian metric on $M$. For some $\beta\in (0, \frac{1}{2})$, we say that $g$ has scalar curvature bounded below by $\kappa\in \mathbb{R}$ in the $\beta$-weak sense at $x\in M$ with respect to $(\Phi, g_0, g_t)$ if there exists a diffeomorphism $\Phi: U_x\to \Phi(U_x)$ for a neighborhood $U_x$ of $x$ and $\Phi(U_x)\subset\mathbb{H}^n$, and there exists a $C^0$ metric $g_0$ on $\mathbb{H}^n$ and a normalized $b$-flow $(g_t)_{t\in (0, T]}$ for $g_0$ satisfying Corollary \ref{cor3.3} such that
    \begin{align*}
        g_0|_{\Phi(U_x)}=\Phi_*g,\\
        \inf_{C>0}\left(\liminf_{t\to0}\left(\inf_{B(\Phi(x), Ct^{\beta})}R(g_t)\right) \right)\geq\kappa.
    \end{align*}
\end{definition}

We denote that $g$ has scalar curvature bounded below by $\kappa$ in the $\beta$-weak sense at $x$ with respect to $(\Phi, g_0, g_t)$ as $R_{C^0_{\beta}}(g)(x)\geq\kappa$.


Finally, we find an inequality of the $C^0$ local mass with different cutoff functions, which implies that the limit of the $C^0$ local mass exists, and either finite or $+\infty$.

\begin{thm}\label{thm1.4}
Let $(M^n, g)$ be a $C^0_{\tau}$-asymptotically hyperbolic manifold with a fixed chart at infinity $\Phi$ and $\tau>\frac{119n}{128}$, $n\geq3$.
    Let $g_0$ be a continuous metric on $\mathbb{H}^n$ such that $g_0=\Phi_*g$ on $A(0, 0.8r, 12r)$ for $r>r_0$ sufficiently large and 
    \[ ||g_0-b||_{L^{\infty}(\mathbb{H}^n)}<\epsilon, \]
    for some $\epsilon<1$.
    Let $g_t=g(t)$ be the normalized $b$-flow as Theorem \ref{thm3.7}.
    Choose $\eta$ as in Theorem \ref{thm5.7}.
    
     Suppose $R_{C^0_{\beta}}(g)(x)\geq-n(n-1)$, for $\beta\in (0, \frac{1}{2})$ and for any $\Phi(x)\in A(0, 0.8r, 12r)$. 
     Let $\varphi$ be any   smooth positive function with  $supp\varphi\subset\subset(0.9, 1.1)$ and with nonzero integrals over $(0.9, 1.1)$ and $\varphi_{\theta}(l, t)$ be defined as \eqref{varphithe} for any $\theta<T_*$.
     Then the limit 
     \[ \lim_{r\to\infty} M_{C^0}(g, \Phi, \varphi_{e^{-r\eta}}(\cdot, 0), r)\]
exists, and either finite or $+\infty$. Furthermore, 
the limit $\lim_{r\to\infty} M_{C^0}(g, \Phi, \varphi_{e^{-r\eta}}(\cdot, t), r)$ is finite if and only if the last condition in  Theorem 2.9 of \cite{MR4685089} is satisfied.
\end{thm}

It should be noted that our approach is similar to that of Burkhardt-Guim \cite{MR4685089}, but there are essential difficulties specific to asymptotically hyperbolic manifolds. The first challenge we encounter is the need to use the normalized Ricci-DeTurck flow instead of the standard Ricci-DeTurck flow, as the normalized version preserves  hyperbolicity. Since the normalized Ricci-DeTurck flow is not scaling invariant,  proving the almost monotonicity of the $C^0$ local mass in section 4 becomes more intricate.

On the other hand, our assumption is stronger than $\tau>\frac{n}{2}$. This is because some information is lost after  mollification via the normalized Ricci-DeTurck flow, as detailed in Section 5.
In contrast to the results by Chrusciel-Herzlich \cite{Chrusciel03} and Gicquaud-Sakovich \cite{gicquaud2025}, we cannot show that the limit in Definition \ref{mac0} is independent of the choice of the coordinate chart at infinity $\Phi$ and we can only define the mass function for $V^0$ because of Lemma \ref{c2scalar}.

The rest of this paper is organized as follows. In Section 2, we define the $C^0$ local mass function and the $C^2$ local mass function, and clarify their relationship.
In Section 3, we introduce the scalar curvature lower bounds in the $\beta$-weak sense for $C^0$ metrics and establish the almost preservation of this lower bound under the normalized Ricci-DeTurck flow initiated from a continuous metric.
Then we prove that the $C^0$-asymptotically hyperbolicity is almost preserved.
In Section 4, we demonstrate that the definition of the $C^0$ local mass function is almost independent of the choice of the cutoff function and eventually show that the limit of the $C^0$ local mass exists.

\begin{ack}
    The author would like to thank Professor Lamm for his suggestions. We also want to thank Professor Jiayu Li, Professor Binglong Chen and Professor Xiao Zhang for their insightful feedback.
\end{ack}

\vspace{.2in}

\section{$C^0$ and $C^2$ local mass function}

\subsection{$C^0$ local mass function}

Let $(\mathbb{H}^n, b)$ denote the hyperbolic space of dimension $n\geq3$:
\[ \mathbb{H}^n=B_1(0),\quad\quad b=\rho^{-2}\delta,\quad\quad \rho(x)=\frac{1-|x|_{\delta}^2}{2}, \]
where $B_1(0)$ is the open unit ball in $\mathbb{R}^n$, $\delta$ denotes the Euclidean metric, and $x$ is the coordinate in $B_1(0)$.
In \cite{gicquaud2025}, Gicquaud-Sakovich defined the mass aspect function for metrics of $L^{\infty}\bigcap W^{1, 2}$ regularity and distributional scalar curvature on asymptotically hyperbolic manifolds, analogous to the asymptotically flat case introduced in \cite{LeePh}.

Throughout this paper, the Levi-Civita connection associated with the hyperbolic metric $b$ will be denoted by $D$, while the Levi-Civita connection with metric $g$ under consideration will be denoted by $\nabla$. 
Moreover, norms and volume forms with a subscript $b$ or $\delta$ are computed with respect to the hyperbolic metric or Euclidean metric, respectively.

\begin{definition}[Definition 2.1 in \cite{gicquaud2025}]
    Let $(M, g)$ be a complete Riemannian $n$-manifold. Then $(M, g)$ is said to be $W^{1,2}_{\tau}$-asymptotically hyperbolic for $\tau>0$ if there exist compact sets $K\subset M$ and $K'\subset\mathbb{H}^n$, a diffeomorphism $\Phi: M\backslash K\rightarrow\mathbb{H}^n\backslash K'$ and a constant $C>1$ such that $C^{-1}b\leq \Phi_*g\leq Cb$ and $e:=\Phi_*g-b$ satisfies
    \[ \int_{\mathbb{H}^n\backslash K'}\rho^{-2\tau}(|De|_b^2+|e|_b^2)d\mu_b<\infty, \]
    where $\Phi$ is called a chart at infinity for $(M, g)$.
\end{definition}

Now we consider the mass aspect function on asymptotically hyperbolic manifolds with $C^0$ metrics. 

\begin{definition}\label{def2.2}
    Let $M$ be a complete smooth manifold and $g$ be a $C^0$ metric on $M$. Then $(M, g)$ is said to be $C^0_{\tau}$ -asymptotically hyperbolic for $\tau>0$ if there exist compact subsets $K\subset M$ and $K'\subset \mathbb{H}^n$, a diffeomorphism $\Phi: M\backslash K\rightarrow\mathbb{H}^n\backslash K'$ and  $e:=\Phi_*g-b$ satisfies
    \[ |e|_b(x)=O(\rho(x)^{\tau}) \]
    for $x\in \mathbb{H}^n\backslash K'$.
\end{definition}

Let $\mathcal{N}:=\{ V\in C^{\infty}(\mathbb{H}^n)|Hess^bV=Vb \}$. This is an $(n+1)$-dimensional vector space with the basis given by 
\[ V^0=\frac{1+|x|_{\delta}^2}{1-|x|_{\delta}^2}=\frac{1}{\rho}-1, \quad\quad V^i=\frac{x^i}{\rho}, \quad i=1, 2,\cdots, n, \]
where $x^1, \cdots, x^n$ are the coordinate functions on $\mathbb{R}^n$. 
There is a natural correspondence between functions in $\mathcal{N}$ and isometries of Minkowski spacetime preserving the geometry of  the hyperboloid \cite{jang21}.

\begin{definition}[Proposition 4.1 in \cite{gicquaud2025}]
    If $(M, g)$ is $W^{1, 2}_{\tau}$-asymptotically hyperbolic, $e\in C^1_{\tau}(\mathbb{H}^n)$ and $R_g+n(n-1)\in L^1_1(M)$, where $R_g$ is the distributional scalar curvature of $g$ and $\tau>\frac{n}{2}$, then, for any $V\in \mathcal{N}$, the mass aspect function is defined by
\[ p(e, V)=\lim_{r\rightarrow\infty}\int_{S_r(0)}[V(div_be-dtr_be)+tr_bedV-e(DV, \cdot)]\nu_rd\mu_b, \]
where $S_r(0)$ denotes the hyperbolic geodesic sphere in $\mathbb{H}^n$ and $\nu_r$ is the outward unit normal of $S_r(0)$ in $\mathbb{H}^n$.
In particular, we will refer to  $p(e, V^0)$ as the mass function.
\end{definition} 

By the coordinates given by $\Phi$, we can write the mass aspect function as
\begin{equation}\label{ma}
    \begin{split}
        p(e, V)=&\lim_{r\rightarrow\infty}\int_{S_r(0)}[V(D_ke_{ij}b^{jk}\nu_r^i-b^{ij}D_ke_{ij}\nu_r^k)+b^{ij}e_{ij}D_kV\nu_r^k-e_{ij}D^iV\nu^j_r]d\mu_b\\
        =&\lim_{r\rightarrow\infty}\int_{S_r(0)}[V(b^{jk}\nu_r^i-b^{ij}\nu_r^k)D_ke_{ij}+(b^{ij}\nu_r^k-b^{jk}\nu_r^i)e_{ij}D_kV]d\mu_b,
    \end{split}
\end{equation}
which coincides with  Chrusciel-Herzlich's definition \cite{Chrusciel03}.
We use $S(l)$ to represent the hyperbolic geodesic sphere of radius $l$ and $B(l)$ to denote the hyperbolic open ball of radius $l$ centered at the origin. 
We see from \eqref{ma} that the first part of the integrand requires the first derivative of $e$.
Therefore, to define the mass for $e\in C^0_{\tau}$, we need a smooth test function.
The definition of the mass of $e\in C^0_{\tau}$ is based on the following fact.

Let $(M,g)$ be a $C^1_{\tau}$-asymptotically hyperbolic manifold with chart at infinity $\Phi:M\backslash K\rightarrow \mathbb{H}^n\backslash K'$ for some compact subsets $K\subset M$ and $K'\subset\mathbb{H}^n$.
Let $r>0$ such that the annulus $A(0, 0.9r, 1.1r)=B(1.1r)\backslash\bar{B}(0.9r)\subset \mathbb{H}^n\backslash K'$.
If $\varphi: \mathbb{R}\rightarrow\mathbb{R}$ is a smooth function with $\int_{0.9}^{1.1}\varphi(l)dl\not=0$, then we have
    \begin{align}
  &\hspace{1.3em}\frac{\int_{0.9r}^{1.1r}\varphi(\frac{l}{r})\int_{S(l)}[V(b^{jk}\nu^i-b^{ij}\nu^k)D_ke_{ij}+(b^{ij}\nu^k-b^{jk}\nu^i)e_{ij}D_kV]d\mu_bdl}{r\int_{0.9}^{1.1}\varphi(l)dl}\label{lef}\\
        &= \frac{\int_{0.9}^{1.1}\varphi(s)\int_{S(rs)}[V(b^{jk}\nu^i-b^{ij}\nu^k)D_ke_{ij}+(b^{ij}\nu^k-b^{jk}\nu^i)e_{ij}D_kV]d\mu_bds}{\int_{0.9}^{1.1}\varphi(s)ds}\notag\\
        &\xlongrightarrow{r\to\infty} \lim_{r\rightarrow\infty}\int_{S(r)}[V(b^{jk}\nu^i-b^{ij}\nu^k)D_ke_{ij}+(b^{ij}\nu^k-b^{jk}\nu^i)e_{ij}D_kV]d\mu_b.\notag
    \end{align}
We can calculate the numerator of \eqref{lef} with $V=V^0$ by integration by parts.
 By definition, the unit normal vector of $S(s)$ is $\nu_s=\frac{x}{\sinh{s}}$ and $s=\ln\left(\frac{1+|x|_{\delta}}{1-|x|_{\delta}}\right)$, $\sinh{s}=\frac{|x|_{\delta}}{\rho}=\frac{2|x|_{\delta}}{1-|x|_{\delta}^2}$. Then we compute that
\begin{align*}
    & \int_{0.9r}^{1.1r}\varphi\left(\frac{l}{r}\right)\int_{S(l)}V^0(b^{jk}\nu^i-b^{ij}\nu^k)D_ke_{ij}d\mu_bdl\\
    =& \int_{A(0, 0.9r, 1.1r)}V^0\varphi\left(\frac{s}{r}\right)\left(b^{jk}\frac{x^i}{\sinh s}-b^{ij}\frac{x^k}{\sinh s}\right)D_ke_{ij}d\mu_b(x)\\
    =& \int_{\partial A(0, 0.9r, 1.1r)}V^0\varphi\left(\frac{s}{r}\right)\left(b^{jk}\frac{x^i}{\sinh s}-b^{ij}\frac{x^k}{\sinh s}\right)e_{ij}\nu_kd\mu_b\\
   & -\int_{A(0, 0.9r, 1.1r)}D_k\left[V^0\varphi\left(\frac{s}{r}\right)\left(b^{jk}\frac{x^i}{\sinh s}-b^{ij}\frac{x^k}{\sinh s}\right) \right]e_{ij}d\mu_b(x)\\
    =& I+II,
\end{align*}
where
\begin{align*}
    I=&\int_{\partial A(0, 0.9r, 1.1r)}V^0\varphi\left(\frac{s}{r}\right)\left(b^{jk}\frac{x^i}{\sinh s}-b^{ij}\frac{x^k}{\sinh s}\right)e_{ij}\nu_kd\mu_b\\
    =& \int_{\partial A(0, 0.9r, 1.1r)}V^0\varphi\left(\frac{s}{r}\right)(e_{ij}\nu^i\nu^j-b^{ij}e_{ij})d\mu_b,
\end{align*}
and 
\begin{align*}
    II=&-\int_{A(0, 0.9r, 1.1r)}D_k\left[V^0\varphi\left(\frac{s}{r}\right)\left(b^{jk}\frac{x^i}{\sinh s}-b^{ij}\frac{x^k}{\sinh s}\right) \right]e_{ij}d\mu_b(x)\\
    =& -\int_{A(0, 0.9r, 1.1r)} D_kV^0\varphi\left(\frac{s}{r}\right)\left(b^{jk}\frac{x^i}{\sinh s}-b^{ij}\frac{x^k}{\sinh s}\right) e_{ij}d\mu_b(x)\\
    & -\int_{A(0, 0.9r, 1.1r)} V^0D_k\varphi\left(\frac{s}{r}\right)\left(b^{jk}\frac{x^i}{\sinh s}-b^{ij}\frac{x^k}{\sinh s}\right) e_{ij}d\mu_b(x)\\
    &-\int_{A(0, 0.9r, 1.1r)} V^0\varphi\left(\frac{s}{r}\right)D_k\left(b^{jk}\frac{x^i}{\sinh s}-b^{ij}\frac{x^k}{\sinh s}\right) e_{ij}d\mu_b(x).
\end{align*}
By computing directly, there holds
\[ D_ks= \frac{x^k}{\rho|x|_{\delta}} =\frac{x^k}{\rho^2\sinh s}=\nu_k,\]
which implies that
\[ D_k\varphi\left( \frac{s}{r} \right)=\varphi'\left( \frac{s}{r} \right)\frac{x^k}{r\rho^2\sinh{s}}. \]
Then 
\begin{align*}
    & -\int_{A(0, 0.9r, 1.1r)} V^0D_k\varphi\left(\frac{s}{r}\right)\left(b^{jk}\frac{x^i}{\sinh s}-b^{ij}\frac{x^k}{\sinh s}\right) e_{ij}d\mu_b(x)\\
    =& -\int_{A(0, 0.9r, 1.1r)} V^0\varphi'\left( \frac{s}{r} \right)r^{-1}\left(\frac{x^ix^j}{\sinh^2 s} -b^{ij}\right)e_{ij}d\mu_b(x).
\end{align*}
Since $b=\rho^{-2}\delta$, the Christoffel symbol of $(\mathbb{H}^n, b)$ is 
\[ \Gamma_{kj}^l=\rho^{-1}(x^j\delta_{kl}+x^k\delta_{jl}-x^l\delta_{kj}). \]
We have that
\begin{align*}
&D_k(x^j\partial_j)=\partial_k+x^jD_k\partial_j=\partial_k+x^j\Gamma_{kj}^l\partial_l \\
=& \partial_k+x^j\rho^{-1}(x^j\delta_{kl}+x^k\delta_{jl}-x^l\delta_{kj})\partial_l\\
=& \partial_k+\rho^{-1}|x|_{\delta}^2\partial_k=(\rho^{-1}-1)\partial_k.
\end{align*} 
So 
\[ D_kx^i=(\rho^{-1}-1)\delta_{ki}=\cosh{s}\delta_{ki}. \]
Therefore, we get
\begin{align*}
    &D_k\left( b^{jk}\frac{x^i}{\sinh s}-b^{ij}\frac{x^k}{\sinh s} \right)\\
    =& b^{jk}\frac{\cosh s}{\sinh s}\delta_{ik}-nb^{ij}\frac{\cosh s}{\sinh s}-\frac{\cosh s}{\sinh^{2} s}b^{jk}x^i\partial_ks+\frac{\cosh s}{\sinh^2 s}b^{ij}x^k\partial_ks\\
    =& (2-n)\frac{\cosh s}{\sinh s}b^{ij}-\frac{\cosh s}{\sinh^3 s}x^ix^j,
\end{align*}
and
\begin{align*}
   &-\int_{A(0, 0.9r, 1.1r)} V^0\varphi\left(\frac{s}{r}\right)D_k\left(b^{jk}\frac{x^i}{\sinh s}-b^{ij}\frac{x^k}{\sinh s}\right) e_{ij}d\mu_b(x)\\
    =& -\int_{A(0, 0.9r, 1.1r)} V^0\varphi\left(\frac{s}{r}\right)\left[(2-n)\frac{\cosh s}{\sinh s}b^{ij}-\frac{\cosh s}{\sinh^3 s}x^ix^j\right]e_{ij}d\mu_b(x)\\
    =& -\int_{A(0, 0.9r, 1.1r)}\frac{\cosh^2 s}{\sinh s}\varphi\left(\frac{s}{r}\right)\left[(2-n)b^{ij}-\frac{x^ix^j}{\sinh^2 s} \right]e_{ij}d\mu_b(x)
    \end{align*}
By the fact that 
\[ D_kV^0=\partial_k(\rho^{-1}-1)=\rho^{-2}x^k, \]
we have
\begin{align*}
    &-\int_{A(0, 0.9r, 1.1r)} D_kV^0\varphi\left(\frac{s}{r}\right)\left(b^{jk}\frac{x^i}{\sinh s}-b^{ij}\frac{x^k}{\sinh s}\right) e_{ij}d\mu_b(x)\\
    =& -\int_{A(0, 0.9r, 1.1r)}\varphi\left(\frac{s}{r}\right)\left(\frac{x^ix^j}{\sinh s}-b^{ij}\sinh s\right)e_{ij}d\mu_b(x).
\end{align*}

Hence, we obtain
\begin{align*}
   & \int_{0.9r}^{1.1r}\varphi\left(\frac{l}{r}\right)\int_{S(l)}[V^0(b^{jk}\nu^i-b^{ij}\nu^k)D_ke_{ij}+(b^{ij}\nu^k-b^{jk}\nu^i)e_{ij}D_kV^0]d\mu_bdl\\
   = & \int_{A(0, 0.9r, 1.1r)}V^0\varphi\left(\frac{s}{r}\right)\left(b^{jk}\frac{x^i}{\sinh s}-b^{ij}\frac{x^k}{\sinh s}\right)D_ke_{ij}d\mu_b(x)\\
   &+\int_{A(0, 0.9r, 1.1r)}\varphi\left(\frac{s}{r}\right)\left( b^{ij}\frac{x^k}{\sinh s}-b^{jk}\frac{x^i}{\sinh s} \right)e_{ij}D_kV^0 d\mu_b(x)\\
   = & \int_{\partial A(0, 0.9r, 1.1r)}V^0\varphi\left(\frac{s}{r}\right)(e_{ij}\nu^i\nu^j-b^{ij}e_{ij})d\mu_b\\
   & -\int_{A(0, 0.9r, 1.1r)} V^0\varphi'\left( \frac{s}{r} \right)r^{-1}\left(\frac{x^ix^j}{\sinh^2 s} -b^{ij}\right)e_{ij}d\mu_b(x)\\
   & -\int_{A(0, 0.9r, 1.1r)}\frac{\cosh^2 s}{\sinh s}\varphi\left(\frac{s}{r}\right)\left[(2-n)b^{ij}-\frac{x^ix^j}{\sinh^2 s} \right]e_{ij}d\mu_b(x)\\
   &-2\int_{A(0, 0.9r, 1.1r)}\varphi\left(\frac{s}{r}\right)\left(\frac{x^ix^j}{\sinh s}-b^{ij}\sinh s\right)e_{ij}d\mu_b(x)\\
   = & \int_{\partial A(0, 0.9r, 1.1r)}\cosh s\varphi\left(\frac{s}{r}\right)(e_{ij}\nu^i\nu^j-b^{ij}e_{ij})d\mu_b\\ 
   &+ \int_{A(0, 0.9r, 1.1r)}\left[ \cosh s\varphi'\left( \frac{s}{r} \right)r^{-1}+\varphi\left( \frac{s}{r} \right)\left(n\sinh s +\frac{n-2}{\sinh s}\right) \right]b^{ij}e_{ij}d\mu_b\\
   & + \int_{A(0, 0.9r, 1.1r)}\left[ \varphi\left(\frac{s}{r}\right)\left(\frac{1}{\sinh s}-\sinh s\right)-\cosh s\varphi'\left( \frac{s}{r} \right)r^{-1} \right]\frac{x^ix^j}{\sinh^2 s}e_{ij}d\mu_b.
\end{align*}

We now arrive at the following definition of $C^0$ local mass function.

\begin{definition}\label{defc0}
   Let $(M,g)$ be a $C^0_{\tau}$-asymptotically hyperbolic manifold with a chart at infinity $\Phi:M\backslash K\rightarrow \mathbb{H}^n\backslash K'$ for some compact subsets $K\subset M$ and $K'\subset\mathbb{H}^n$ and $\tau>0$.
Let $r>0$ such that the annulus $A(0, 0.9r, 1.1r)=B(1.1r)\backslash\bar{B}(0.9r)\subset \mathbb{H}^n\backslash K'$.
Let $\varphi: \mathbb{R}\rightarrow\mathbb{R}$ be a smooth function with $\int_{0.9}^{1.1}\varphi(l)dl\not=0$. 
Writing $\Phi_*g=g$ and $e=\Phi_*g-b$, we define the $C^0$ local mass function of $g$ with respect to $\varphi$ and $\Phi$ at $r$ by 
\begin{align*}
   & M_{C^0}(g, \Phi, \varphi, r)\\
    :=& \frac{1}{r\int_{0.9}^{1.1}\varphi(l)dl}\bigg[ \int_{\partial A(0, 0.9r, 1.1r)}\cosh s\varphi\left(\frac{s}{r}\right)(e_{ij}\nu^i\nu^j-b^{ij}e_{ij})d\mu_b\\
   & + \int_{A(0, 0.9r, 1.1r)}\left[ \cosh s\varphi'\left( \frac{s}{r} \right)r^{-1}+\varphi\left( \frac{s}{r} \right)\left(n\sinh s +\frac{n-2}{\sinh s}\right) \right]b^{ij}e_{ij}d\mu_b\\
   & + \int_{A(0, 0.9r, 1.1r)}\left[ \varphi\left(\frac{s}{r}\right)\left(\frac{1}{\sinh s}-\sinh s\right)-\cosh s\varphi'\left( \frac{s}{r} \right)r^{-1} \right]\frac{x^ix^j}{\sinh^2 s}e_{ij}d\mu_b \bigg],
\end{align*}
where $x^i$ denotes the coordinate component and $s=|x|_b$ is the geodesic distance.
\end{definition}

In particular, if $e\in C^1_{\tau}(\mathbb{H}^n)$ and the mass aspect function $p(e,V)$ exists, by the above process, \eqref{lef} implies that 
\[ p(e, V^0, \Phi)=\lim_{r\rightarrow\infty}M_{C_0}(g, \Phi, \varphi, r), \]
which means that the $C^0$ local mass is a generalization of the mass  function.

\subsection{$C^2$ local mass aspect function}

\begin{definition}\label{c2mass}
    If $g$ is a $C^2$ Riemannian metric on $M$ and $\Phi$ is a coordinate chart of $M$ such that $\Phi_*g$ is a $C^2$ metric on a region of $\mathbb{H}^n$ containing $S(r)$ for some $r>0$, then we define the $C^2$ local mass aspect function as
    \[ M_{C^2}(g, \Phi, r, V):=\int_{S(r)}[V(b^{jk}\nu_r^i-b^{ij}\nu_r^k)D_ke_{ij}+(b^{ij}\nu_r^k-b^{jk}\nu_r^i)e_{ij}D_kV]d\mu_b, \]
    where we write $e=\Phi_*g-b$ and $V\in \mathcal{N}$. 
\end{definition}

We will omit $\Phi$ for a fixed coordinate chart in $M_{C^2}(g, \Phi, r, V)$.
In particular, if the mass aspect function $p(e, V)$ exists, then there hold
    \[p(e, V, \Phi)=\lim_{r\to \infty}M_{C^2}(g, \Phi, r, V)\]
    and
    \[ M_{C^0}(g, \Phi, \varphi, r)=\frac{\int_{0.9}^{1.1}\varphi(l)M_{C^2}(g, \Phi, rl, V^0)dl}{\int_{0.9}^{1.1}\varphi(l)dl}. \]

\begin{lemma}\label{c2scalar}
    Suppose that $g$ is a $C^2$ asymptotically hyperbolic Riemannian metric on $\mathbb{H}^n\backslash B(r_0)$. Let $e=g-b$. For $r_0<r_1<r_2$ and $V\in \mathcal{N}$, we have
    \[ M_{C^2}(g, r_2, V)-M_{C^2}(g, r_1, V)=\int_{A(0, r_1, r_2)}[V(R_g+n(n-1))+Q]d\mu_b, \]
    where $Q=Q(V, dV, e, De, D^2e)$ satisfies $|Q|\lesssim(|V|+|dV|)(|De|^2+|e|^2)+|V||e||D^2e|$ and norms are computed by $b$.
\end{lemma}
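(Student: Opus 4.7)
The strategy is to rewrite $M_{C^2}(g,r,V)$ as a flux and apply the divergence theorem on the annulus, then identify the bulk integrand with the linearised scalar curvature plus a controlled remainder. First I would recognise the integrand in Definition \ref{c2mass} as $U^m \nu_m$, with
\[ U^m := V\bigl(D_k e^{mk} - D^m (b^{ij}e_{ij})\bigr) + (b^{ij}e_{ij})\, D^m V - e^{mj} D_j V, \]
indices being raised by $b$. Applied to $A(0,r_1,r_2)$, the divergence theorem produces
\[ M_{C^2}(g, r_2, V) - M_{C^2}(g, r_1, V) = \int_{A(0,r_1,r_2)} D_m U^m \, d\mu_b. \]

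Next I would compute $D_m U^m$ by the Leibniz rule. Two cancellations occur: the pair $D_m V \cdot D_k e^{mk}$ and $-D_m e^{mj}\cdot D_j V$ cancels after using the symmetry $e^{mj} = e^{jm}$, while the pair $-D_m V\cdot D^m(b^{ij}e_{ij})$ and $D_m(b^{ij}e_{ij})\cdot D^m V$ cancels outright. What remains is
\[ D_m U^m = V\bigl(D_m D_k e^{mk} - \Delta_b(b^{ij}e_{ij})\bigr) + (b^{ij}e_{ij})\Delta_b V - e^{mj} D_m D_j V. \]
The hypothesis $V\in\mathcal{N}$ yields $D_m D_j V = V b_{mj}$, hence $\Delta_b V = nV$ and $e^{mj}D_m D_j V = V\,b^{ij}e_{ij}$. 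Substituting gives
\[ D_m U^m = V\Bigl[D_m D_k e^{mk} - \Delta_b(b^{ij}e_{ij}) + (n-1)\,b^{ij}e_{ij}\Bigr]. \]
The bracketed expression is precisely $DR|_b\cdot e$, the linearisation of scalar curvature at $b$ applied to $e$; indeed $DR|_b\cdot e = -\Delta_b \mathrm{tr}_b e + \mathrm{div}_b\mathrm{div}_b e - \langle\mathrm{Ric}_b, e\rangle_b$, and $\mathrm{Ric}_b = -(n-1)b$ produces exactly the $(n-1)\,b^{ij}e_{ij}$ contribution.

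Finally, I would expand $R_g$ about $b$: using $g^{-1} = b^{-1} - b^{-1} e\, b^{-1} + O(|e|^2)$ in the coordinate expression for scalar curvature in terms of Christoffel symbols and their derivatives, and collecting terms, one obtains
\[ R_g = R_b + DR|_b\cdot e + \mathcal{R}, \qquad |\mathcal{R}|\lesssim |De|^2 + |e|^2 + |e|\,|D^2 e|, \]
the implicit constant depending only on curvature invariants of $b$. Since $R_b = -n(n-1)$, combining with the previous step yields $D_m U^m = V(R_g + n(n-1)) + Q$ with $Q := -V\mathcal{R}$, and $|Q|\lesssim |V|(|De|^2 + |e|^2) + |V|\,|e|\,|D^2 e|$, which is stronger than (hence implies) the stated bound.

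The main technical hurdle is the pair of cancellations in the divergence computation, which must correctly exploit the symmetry of $e$ and the identity $\mathrm{Hess}_b V = Vb$; once those are in hand, the identification with the linearised scalar curvature is immediate, and the Taylor remainder estimate for $R_g$ reduces to a standard bookkeeping in the curvature formula.
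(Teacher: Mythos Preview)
Your proof is correct and takes a cleaner route than the paper's. You recognise the boundary integrand as a flux $U^m\nu_m$ built with $b^{-1}$, apply the divergence theorem, and use $\mathrm{Hess}_b V=Vb$ to identify $D_mU^m$ exactly with $V\cdot DR|_b\cdot e$; the remainder $Q=-V\mathcal{R}$ then drops out of the second-order Taylor expansion of $R_g$ about $b$, and depends only on $|V|$, not $|dV|$. The paper instead starts from the divergence-form expansion $R_g=-n(n-1)+(n-1)\mathrm{tr}_be+D_i[g^{jl}g^{ik}(D_je_{kl}-D_ke_{jl})]+Q(e,De)$ with $g^{-1}$ (not $b^{-1}$) in the divergence and only first derivatives of $e$ in the remainder, multiplies by $V$, and integrates by parts twice; the mismatch $g^{-1}-b^{-1}$ generates boundary error terms $Q_1$ that must be pushed back into the bulk via $\int_{\partial A}Q_1\cdot\nu=\int_A D_iQ_1$, and it is this last step that produces both the $|dV|$ contributions and the $|V||e||D^2e|$ term in the paper's bound. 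Your approach is more direct and yields the sharper estimate; the paper's has the incidental feature that its intermediate expansion keeps $D^2e$ out of the non-divergence remainder, which is sometimes useful elsewhere but is not needed for this lemma.
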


\begin{proof}
    By proposition  2.2 in \cite{gicquaud2025}, the first order variation of the scalar curvature near the hyperbolic metric $b$ is 
\begin{equation*}
    R_g=-n(n-1)+(n-1)tr_be+D_i[g^{jl}g^{ik}(D_je_{kl}-D_ke_{jl})]+Q(e, De),
\end{equation*}
where $|Q(e, De)|\lesssim |e|^2+|De|^2$. 
We have by integration by parts that
\begin{align*}
    &\int_{A(0, r_1, r_2)}V[R_g+n(n-1)]d\mu_b\\
    =& \int_{A(0, r_1, r_2)}V\left[(n-1)tr_be+D_i(g^{jl}g^{ik}(D_je_{kl}-D_ke_{jl}))+Q(e, De)\right]d\mu_b\\
    =& \int_{A(0, r_1, r_2)}[V(n-1)tr_be+D_i(V(g^{jl}g^{ik}-g^{ij}g^{kl})D_je_{kl})-D_iV(g^{jl}g^{ik}-g^{ij}g^{kl})D_je_{kl}\\
    &\qquad\qquad\quad+VQ(e, De)]d\mu_b\\
    =& \int_{A(0, r_1, r_2)} \big[ V(n-1)tr_be+D_i(V(g^{jl}g^{ik}-g^{ij}g^{kl})D_je_{kl})-D_j(D_iV(g^{jl}g^{ik}-g^{ij}g^{kl})e_{kl})\\
    & \qquad\qquad\quad+D_j(D_iV(g^{ik}g^{jl}-g^{ij}g^{kl}))e_{kl}+VQ(e, De) \big]d\mu_b\\
    = & \int_{A(0, r_1, r_2)}[(n-1)Vtr_be+D_jD_iV(g^{ik}g^{jl}-g^{ij}g^{kl})e_{kl}+D_iVD_j(g^{ik}g^{jl}-g^{ij}g^{kl})e_{kl}\\
    &\qquad\qquad\quad+VQ(e, De)]d\mu_b\\
   & +\int_{S(r_2)}[(g^{ik}g^{jl}-g^{ij}g^{kl})VD_je_{kl}-(g^{ik}g^{jl}-g^{ij}g^{kl})D_jVe_{kl}]\nu_id\mu_b\\
   &-\int_{S(r_1)}[(g^{ik}g^{jl}-g^{ij}g^{kl})VD_je_{kl}-(g^{ik}g^{jl}-g^{ij}g^{kl})D_jVe_{kl}]\nu_id\mu_b.
\end{align*}
Since $|b^{jl}b^{ik}-g^{jl}g^{ik}|\lesssim|e|$, we see that
\begin{align*}
   & \int_{S(r_2)}[(g^{ik}g^{jl}-g^{ij}g^{kl})VD_je_{kl}-(g^{ik}g^{jl}-g^{ij}g^{kl})D_jVe_{kl}]\nu_id\mu_b\\
    =&\int_{S(r_2)}[(b^{ik}b^{jl}-b^{ij}b^{kl})(VD_je_{kl}-D_jVe_{kl})\nu_i+Q_1(V, dV, e, De)\nu_i]d\mu_b\\
    =& \int_{S(r_2)}[(b^{jl}\nu^k-b^{kl}\nu^j)(VD_je_{kl}-D_jVe_{kl})+Q_1(V, dV, e, De)\nu_i]d\mu_b\\
    = & M_{C^2}(g, r_2, V)+\int_{S(r_2)}Q_1(V, dV, e, De)\nu_id\mu_b,
\end{align*}
where $|Q_1(V, dV, e, De)|\lesssim |V||e||De|+|dV||e|^2$.
By the fact that $HessV=Vb$, we have
\begin{align*}
    &\int_{A(0, r_1, r_2)}[(n-1)Vtr_be+D_jD_iV(g^{ik}g^{jl}-g^{ij}g^{kl})e_{kl}+D_iVD_j(g^{ik}g^{jl}-g^{ij}g^{kl})e_{kl}]d\mu_b\\
    =& \int_{A(0, r_1, r_2)}Q_2(e, dV, De)d\mu_b,
\end{align*}
where $|Q_2(e, dV, De)|\lesssim |V||e|^2+|dV||De||e|$.
Hence, we obtain that
\begin{align*}
    &\int_{A(0, r_1, r_2)}V[R_g+n(n-1)]d\mu_b\\
    = & \int_{A(0, r_1, r_2)} [Q_2(e, dV, De)+VQ(e, De)]d\mu_b
    +M_{C^2}(g, r_2, V)
    -M_{C^2}(g, r_1, V)\\
   & +\int_{S(r_2)}Q_1(V, dV, e, De)\nu_id\mu_b-\int_{S(r_1)}Q_1(V, dV, e, De)\nu_id\mu_b\\
   = & \int_{A(0, r_1, r_2)} [Q_2(e, dV, De)+VQ(e, De)]d\mu_b
    +M_{C^2}(g, r_2, V)
    -M_{C^2}(g, r_1, V)\\
    & + \int_{A(0, r_1, r_2)}D_iQ_1d\mu_b.
\end{align*} 
Since
\[ |Q_2(e, dV, De)+VQ(e, De)|\lesssim(|V|+|dV|)(|De|^2+|e|^2), \]
\[ |DQ_1(V, dV, e, De)|\lesssim(|V|+|dV|)(|De|^2+|e|^2)+|V||e||D^2e|,  \]
we arrive at the result.
\end{proof}

\vspace{.2in}

\section{Scalar curvature lower bound for $C^0$ metrics}

\subsection{Ricci-DeTurck flow from $C^0$ metrics}
We now aim to define the scalar curvature lower bound for $C^0$ metrics, which we accomplish by leveraging the Ricci-DeTurck flow.
To regularize continuous metrics, we appeal to a result by Simon \cite{MSimon}.
We begin with the Ricci-DeTurck flow associated with a background metric $h$;
 following \cite{MSimon}, we refer to this as the $h$-flow to highlight its dependence on $h$.

Let $(M, h)$ be a complete smooth Riemannian manifold  such that for every $i\in \mathbb{N}$, there exists $k_i>0$ satisfying
\begin{equation}\label{h}
|\tilde{\nabla}^iRm(h)|\leq k_i, 
\end{equation}
where $\tilde{\nabla}$ denotes the covariant derivative with respect to $h$.
A smooth family of metrics $g(t)$ on $M\times (0, T]$ is said to be a solution to the $h$-flow if it satisfies
\begin{equation}\label{gflow}
    \left\{\begin{array}{l}
\partial_{t} g_{i j}=-2 R_{i j}+\nabla_{i} W_{j}+\nabla_{j} W_{i} , \\
W^{k}=g^{p q}\left(\Gamma_{p q}^{k}(g)-\widetilde{\Gamma}_{p q}^{k}(h)\right) ,
\end{array}\right.
\end{equation}
where $R_{ij}$ denotes the Ricci tensor of $g(t)$, $\Gamma(g)$ and $\widetilde{\Gamma}(h)$ denote the Christoffel symbols of $g$ and $h$, respectively.
If the initial metric $g_0$ is smooth, the Ricci flow is equivalent to the Ricci-DeTurck flow. 
Let $\chi_t$ be the diffeomorphism defined by 
\begin{equation}\label{diff}
    \left\{\begin{aligned}
\frac{\partial}{\partial t} \chi_{t}(x) & =-W\left(\chi_{t}(x), t\right); \\
\chi_{0}(x) & =x.
\end{aligned}\right.
\end{equation}
Then the pullback of the Ricci-DeTurck flow $\hat{g}(t)=\chi_t^*g(t)$ satisfies the Ricci flow
\[\partial_t\hat{g}=-2Ric(\hat{g}),\]
with initial condition $\hat{g}(0)=g(0)=g_0$.
For $\epsilon>0$, a continuous metric $g$ is said to be $(1+\epsilon)$-close to $h$ if 
\[ (1+\epsilon)^{-1}h\leq g\leq (1+\epsilon)h. \]
In \cite{MSimon}, Simon proved the following result for continuous metrics.

\begin{thm}[Theorem 5.2 in \cite{MSimon}]\label{simon}
    There is $\epsilon(n)>0$ such that the following is true:
Let $(M, h)$ be a complete manifold satisfying \eqref{h}. 
If $g_0$ is a continuous metric on $M$ such that $g_0$ is $(1+\epsilon(n))$-close to $h$, then the $h$-flow \eqref{gflow} admits a smooth solution $g(t)$ on $M\times (0, T_0]$ for some $T_0(n, k_0)>0$ such that 
\begin{enumerate}
    \item \[ \lim_{t\to0}\sup_{\Omega}|g(t)-g_0|=0, \quad\forall\Omega\subset\subset M; \]
    \item For all $i\in \mathbb{N}$, there is $C_i>0$ depending only on $n, k_0, \cdots,k_i$ so that
    \begin{equation}\label{gde}
        \sup_{M}|\tilde{\nabla}^ig(t)|\leq \frac{C_i}{t^{i/2}};
    \end{equation}  
    \item $g(t)$ is $(1+2\epsilon(n))$-close to $h$ for all $t\in (0, T_0]$,
\end{enumerate}
where the norm and connection are with respect to $h$.
\end{thm}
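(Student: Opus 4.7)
The plan is to approximate $g_0$ by smooth metrics, run the $h$-flow from each approximation, extract uniform a priori estimates, and pass to a limit — essentially Hamilton--DeTurck short-time existence in the smooth case combined with a stability argument to accommodate only continuous initial data.

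First I would regularize: using a partition of unity on $M$ and convolution in local coordinates (or equivalently a brief linear heat regularization with respect to $h$), produce a family $\{g_0^{(\alpha)}\}_{\alpha>0}$ of smooth metrics on $M$ that converge to $g_0$ uniformly on compact subsets and are globally $(1+\epsilon')$-close to $h$ with $\epsilon'<2\epsilon(n)$. Rewriting \eqref{gflow} in terms of the background connection $\tilde{\nabla}$, the $h$-flow becomes schematically
\[
\partial_t g_{ij} = g^{ab}\tilde{\nabla}_a\tilde{\nabla}_b g_{ij} + g^{-1}\ast g\ast Rm(h) + g^{-1}\ast \tilde{\nabla} g\ast \tilde{\nabla} g,
\]
which is quasilinear and strongly parabolic in $g$ as long as $g$ is uniformly comparable to $h$. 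Since each $g_0^{(\alpha)}$ is smooth with bounded derivatives of every order, standard parabolic short-time existence on manifolds of bounded geometry yields smooth solutions $g^{(\alpha)}(t)$ on some initial time interval.

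Second, I would establish a priori estimates on a uniform interval $[0,T_0]$ with $T_0=T_0(n,k_0)$ independent of $\alpha$. For $L^\infty$-closeness (property (3)) I would apply the parabolic maximum principle to a scalar quantity such as $|g-h|_h^2$ or $\mathrm{tr}_h g+\mathrm{tr}_g h$, which satisfies an evolution inequality of the form
\[
\bigl(\partial_t - g^{ab}\tilde{\nabla}_a\tilde{\nabla}_b\bigr)|g-h|_h^2 \leq C(n,k_0)\bigl(|g-h|_h^2 + |g-h|_h\bigr),
\]
and comparison with a time-linear barrier shows that the $(1+2\epsilon)$-closeness is preserved. For the derivative estimates in (2) I would use a Bernstein/Shi argument inductively in $k$: the composite function
\[
F_k := A_k|g-h|_h^2 + \sum_{j=1}^{k} B_{k,j}\, t^j |\tilde{\nabla}^j g|_h^2
\]
with coefficients chosen so that the negative term $-2\, t^j|\tilde{\nabla}^{j+1}g|_h^2$ dominates the cross-terms satisfies $(\partial_t - g^{ab}\tilde{\nabla}_a\tilde{\nabla}_b)F_k \leq C(n,k_0,\ldots,k_k)$; the maximum principle on the noncompact manifold, justified by cutoffs built from the bounds $|\tilde{\nabla}^i Rm(h)|\leq k_i$, then gives $|\tilde{\nabla}^k g(t)|_h \leq C_k t^{-k/2}$ on $(0,T_0]$.

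Third, with $\alpha$-independent bounds in hand, Arzela--Ascoli on compact subsets of $M\times(0,T_0]$ extracts a subsequence converging in $C^\infty_{loc}$ to a smooth limit $g(t)$ that solves \eqref{gflow} and satisfies (2) and (3). To obtain property (1), I would prove a uniform stability comparison: for $\alpha,\alpha'>0$ and any $\Omega\subset\subset M$,
\[
\sup_{\Omega\times[0,T_0]} |g^{(\alpha)}(t)-g^{(\alpha')}(t)|_h \leq C(\Omega,n,k_0)\,\|g_0^{(\alpha)}-g_0^{(\alpha')}\|_{L^\infty(M)},
\]
obtained by applying the maximum principle with a suitable barrier to the difference $g^{(\alpha)}-g^{(\alpha')}$, which solves a linear parabolic system whose coefficients are uniformly bounded in $\alpha$ by (2)--(3). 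This implies $\{g^{(\alpha)}\}$ is Cauchy in $C^0_{loc}(M\times[0,T_0])$, and the limit attains $g_0$ at $t=0$ in the sense of (1). The principal obstacle is precisely this last point: the interior bounds \eqref{gde} blow up as $t\to 0$, so (1) is not a consequence of parabolic regularity at positive times; one must instead transport the only quantitative information available about $g_0$ — the local moduli of continuity of its mollifications — through the flow in a way uniform in the mollification scale $\alpha$. This is the step that dictates the smallness of $\epsilon(n)$, which must be small enough that the linearized equation for $g^{(\alpha)}-g^{(\alpha')}$ is strongly parabolic with ellipticity constants depending only on $n$ and $k_0$.
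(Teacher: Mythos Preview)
The paper does not prove this theorem; it is quoted verbatim as Theorem~5.2 of Simon~\cite{MSimon} and used as a black box, so there is no in-paper proof to compare against. Your sketch is a faithful outline of Simon's original argument (mollify $g_0$, run the strictly parabolic $h$-flow from smooth data, derive uniform Shi-type estimates via the maximum principle on quantities like $t^k|\tilde{\nabla}^k g|_h^2$, and pass to a limit using a $C^0$ stability estimate for the difference of two solutions), and the difficulty you flag at the end---that the $t^{-k/2}$ bounds give no information as $t\to 0$, so recovering $g_0$ in the limit requires a separate $L^\infty$ contraction estimate for differences of solutions---is exactly the point Simon isolates.
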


\vspace{.2in}

\subsection{Heat kernel estimates under the Ricci-DeTurck flow}

Given a solution $g(t)$ to the $h$-flow \eqref{gflow} on a smooth manifold $M$, let $\hat{K}(x, t; y, s)$ denote the heat kernel associated with the Ricci flow solution $\hat{g}(t)=\chi_t^*g(t)$. That is, for a fixed point $(y, s)\in M\times I$, 
\begin{align*}
   &\partial_t\hat{K}(x, t; y, s)=\Delta_{\hat{g}_t, x}\hat{K}(x, t; y, s), \\
    & \lim_{t\to s}\hat{K}(x, t; y, s)=\delta_y,
\end{align*} 
and for all $s<t$, 
\[ \int_M\hat{K}(x, t; y, s)d\mu_s(y)=1. \]

The push-forward  of $\hat{K}(x, t; y, s)$ under the diffeomorphism $\chi_t$ in \eqref{diff} is
\[ K(x, t; y, s)=\hat{K}(\chi^{-1}_t(x), t; \chi^{-1}_s(y), s), \]
and it serves as the heat kernel for the $h$-flow. It satisfies
\begin{align*}
   &\partial_tK(x, t; y, s)=\Delta_{{g}_t, x}{K}(x, t; y, s)+\langle W, \nabla_xK(x, t; y, s)\rangle, \\
    & \lim_{t\to s}{K}(x, t; y, s)=\delta_y,
\end{align*} 
and for all $s<t$, 
\[ \int_M{K}(x, t; y, s)d\mu_s(y)=1. \]

\begin{lemma}\label{heat}
    Suppose that $g(t)$ is a solution to \eqref{gflow} on $\mathbb{H}^n$ with background metric $h=b$ satisfying $|g(t)-b|_b<\epsilon$ and Theorem \ref{simon}.
    Let $K(x, t; y, s)$ be the heat kernel of the $b$-flow. 
    Then there exist constants $C=C(n, \epsilon)<\infty$, $D=D(n, \epsilon)<\infty$ such that for any $x, y\in \mathbb{H}^n$, $t>0$ sufficiently small and $\frac{t}{2}\leq s<t$,
    \[ K(x, t; y, s)\leq \frac{C}{(t-s)^{n/2}}\exp\left( -\frac{d_b(x,y)^2}{D(t-s)} \right) \]
    and for any $r>0$
    \[ \int_{\mathbb{H}^n\backslash B(x, r)}K(x, t; y, s)d\mu_s(y)<C\exp\left( -\frac{r^2}{D(t-s)} \right), \]
    where $B(x, r)$ is the geodesic ball centered at $x$ with radius $r$ in $(\mathbb{H}^n, b)$.
\end{lemma}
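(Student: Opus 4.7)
The plan is to reduce to a Gaussian upper bound for the heat kernel of the pulled-back Ricci flow, and then transfer the bound back through the DeTurck diffeomorphism $\chi_t$. Concretely, I would first use Theorem \ref{simon} and the closeness $|g(t)-b|_b<\epsilon$ to extract the inputs I will need: the metric $g(t)$ is uniformly equivalent to $b$, its spatial derivatives satisfy $|\tilde\nabla^k g(t)|_b\leq C_k t^{-k/2}$, and therefore the vector field $W$ appearing in \eqref{gflow} obeys $|W|_b\leq C(n,\epsilon)\,t^{-1/2}$. Integrating \eqref{diff} along trajectories then gives the crucial displacement estimate
\[
|\chi_t^{-1}(x)-x|_b\ \leq\ \int_0^t |W(\cdot,\tau)|_b\,d\tau\ \leq\ C(n,\epsilon)\sqrt{t},
\]
and similarly at time $s$ (with $s\geq t/2$ so $\sqrt{s}\asymp\sqrt{t}$). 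Thus $\chi$ moves points by at most one Gaussian scale over $[0,t]$.

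Next I would invoke a Gaussian upper bound for the Ricci flow heat kernel $\hat K(\hat x,t;\hat y,s)$ associated with $\hat g(t)=\chi_t^*g(t)$. Because $\hat g(t)$ is obtained from $g(t)$ by a diffeomorphism, its curvature and its equivalence with the pulled-back hyperbolic metric inherit the bounds from Theorem \ref{simon}; in particular $\hat g$ is a complete Ricci flow with bounded curvature on $[s,t]$ for $t$ small. Under these hypotheses the standard Cao--Hamilton / Chau--Tam--Yu Gaussian upper bound applies and yields
\[
\hat K(\hat x,t;\hat y,s)\ \leq\ \frac{C}{(t-s)^{n/2}}\exp\!\left(-\frac{d_{\hat g_s}(\hat x,\hat y)^2}{D(t-s)}\right),
\]
with $C,D$ depending on $n$ and $\epsilon$ only. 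Using the uniform equivalence of $\hat g_s$ with $b$ (up to the diffeomorphism $\chi_s$), this distance can be replaced by $|\chi_t^{-1}(x)-\chi_s^{-1}(y)|_b$ at the cost of adjusting $D$.

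To get back the claimed estimate for $K(x,t;y,s)=\hat K(\chi_t^{-1}(x),t;\chi_s^{-1}(y),s)$, I split into two regimes according to the size of $|x-y|_b$ relative to the Gaussian scale $\sqrt{t-s}$. When $|x-y|_b\leq 4C\sqrt{t-s}$ the Gaussian factor is uniformly bounded and the prefactor $(t-s)^{-n/2}$ already absorbs the claim. When $|x-y|_b\geq 4C\sqrt{t-s}$, the displacement estimate gives $|\chi_t^{-1}(x)-\chi_s^{-1}(y)|_b\geq \tfrac{1}{2}|x-y|_b$, so the exponential factor controls everything after a further adjustment of $D$. For the second bound I would integrate the first pointwise estimate over $\mathbb{H}^n\setminus B(x,r)$ using polar coordinates: for $t-s$ small the annuli $B(x,\rho+d\rho)\setminus B(x,\rho)$ have volume $\lesssim \rho^{n-1}\,d\rho$ on the relevant scale, and the elementary identity $\int_r^\infty \rho^{n-1}(t-s)^{-n/2}e^{-\rho^2/(D(t-s))}d\rho\lesssim e^{-r^2/(D'(t-s))}$ finishes the proof after absorbing the polynomial factor into the exponential by a further constant adjustment.

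The main obstacle I expect is the second step, namely quoting a Gaussian upper bound for the Ricci flow heat kernel $\hat K$ in a form that is genuinely uniform in our small-time regime and that uses only the bounded curvature produced by Theorem \ref{simon}. The two issues to verify carefully are (i) that the Gaussian decay is measured in $\hat g_s$-distance rather than $\hat g_t$-distance, since this is what allows the replacement by the $b$-distance $|x-y|_b$ after accounting for $\chi$; and (ii) that the bound is scale-invariant enough to give the correct $(t-s)^{-n/2}$ prefactor despite the drift $W$ only being $O(t^{-1/2})$, which is borderline in the Aronson/Davies framework but integrable in time and hence admissible after a Duhamel or shift-of-measure argument.
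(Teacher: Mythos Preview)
Your overall strategy---Gaussian bound for the Ricci flow kernel $\hat K$, transfer through $\chi$, then integrate in polar coordinates---is exactly the paper's. But the transfer step as you wrote it does not close. You bound the \emph{absolute} drift $|\chi_t^{-1}(x)-x|_b\le C\sqrt{t}$ (and similarly for $y$), and then split cases at $4C\sqrt{t-s}$. These scales are incompatible: even with $s\ge t/2$, one can have $t-s\ll t$, so in your ``large'' regime $|x-y|_b\ge 4C\sqrt{t-s}$ the triangle inequality only yields $|\chi_t^{-1}(x)-\chi_s^{-1}(y)|_b\ge |x-y|_b-2C\sqrt{t}$, which need not be $\ge\tfrac12|x-y|_b$. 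Moving the threshold up to $4C\sqrt{t}$ breaks the ``small'' regime instead, since then $\exp(-|x-y|_b^2/(D(t-s)))$ is no longer bounded below. A second, related issue is the claim that $d_{\hat g_s}(\cdot,\cdot)$ can be replaced by $|\cdot-\cdot|_b$: since $\hat g_s=\chi_s^*g_s$ is uniformly equivalent to $\chi_s^*b$, not to $b$, you would need $\chi_s$ to be a $b$-quasi-isometry, and the available estimate $|DW|\lesssim \tau^{-1}$ is not integrable in $\tau$, so this is not available.

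The paper fixes both problems at once by inserting the intermediate point $\chi_t^{-1}(y)$ rather than $y$: one has $d_{\hat g(t)}(\chi_t^{-1}(x),\chi_t^{-1}(y))=d_{g(t)}(x,y)\asymp d_b(x,y)$ directly, and the remaining error is the \emph{relative} drift $d_{\hat g(t)}(\chi_t^{-1}(y),\chi_s^{-1}(y))\le C(\sqrt{t}-\sqrt{s})$, obtained by integrating $|W|\lesssim\tau^{-1/2}$ only over $[s,t]$. Since $(\sqrt{t}-\sqrt{s})^2\le t-s$, this error is harmless in the exponent $\exp\!\big(C(\sqrt t-\sqrt s)^2/(D(t-s))\big)\le e^{C/D}$. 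If you replace your absolute drift by this relative drift (equivalently, compare $d_{\hat g_s}(\chi_t^{-1}(x),\chi_s^{-1}(y))$ to $d_b(\chi_s\circ\chi_t^{-1}(x),y)$ via $g_s\asymp b$), your case split at $\sqrt{t-s}$ becomes valid and the rest of your argument goes through.
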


\begin{proof}
Let $\hat{K}(x, t; y, s)$ be the heat kernel for the Ricci flow solution $\hat{g}(t)=\chi_t^*g(t)$.
By Lemma 2.4 of \cite{paula19}, for all $x, y\in \mathbb{H}^n$, $\frac{t}{2}\leq s<t$, there exist constants $C=C(n, \epsilon)<\infty$, $D=D(n)<\infty$, such that
\begin{equation*}
    \hat{K}(x, t; y, s)\leq C\exp\left( -\frac{d^2_{\hat{g}(\frac{t}{2})}(x, y)}{D(t-s)} \right)\text{vol}^{-\frac{1}{2}}B_{\hat{g}(\frac{t}{2})}\left( x, \sqrt{\frac{t-s}{2}} \right)\text{vol}^{-\frac{1}{2}}B_{\hat{g}(\frac{t}{2})}\left( y, \sqrt{\frac{t-s}{2}} \right).
\end{equation*}
Pushing forward $\hat{K}(x, t; y, s)$ by $\chi$, we get
\begin{equation}
\begin{split}
    & K(x, t; y, s)=\hat{K}(\chi^{-1}_t(x), t; \chi^{-1}_s(y), s)\\
    \leq & C\exp\left( -\frac{d^2_{\hat{g}(\frac{t}{2})}(\chi^{-1}_t(x), \chi^{-1}_s(y))}{D(t-s)} \right)\text{vol}^{-\frac{1}{2}}B_{\hat{g}(\frac{t}{2})}\left( \chi^{-1}_t(x), \sqrt{\frac{t-s}{2}} \right)\\
    &\times\text{vol}^{-\frac{1}{2}}B_{\hat{g}(\frac{t}{2})}\left( \chi^{-1}_s(y), \sqrt{\frac{t-s}{2}} \right). 
\end{split}
\end{equation}
By the Jensen inequality, there holds
\[ d^2_{\hat{g}(t)}(\chi^{-1}_t(x), \chi^{-1}_s(y))\geq \frac{1}{2}d^2_{\hat{g}(t)}(\chi^{-1}_t(x), \chi^{-1}_{t}(y))-d^2_{\hat{g}(t)}(\chi^{-1}_t(y), \chi^{-1}_s(y)). \]
Since by \eqref{gde}, we have
\[ |W|(t)\lesssim |\widetilde{\nabla} g|(t)<\frac{C}{\sqrt{t}}, \]
which implies that for any $x\in \mathbb{H}^n$ and any $0<t_1<t_2$,
\[ d_{\hat{g}(t_1)}(\chi_{t_1}(x), \chi_{t_2}(x))\leq C(\sqrt{t_2}-\sqrt{t_1}). \]
By (3) of Theorem \ref{simon}, we see that
\begin{align*}
    &\exp\left( -\frac{d^2_{\hat{g}(\frac{t}{2})}(\chi^{-1}_t(x), \chi^{-1}_s(y))}{D(t-s)} \right)\leq C\exp\left( -\frac{d^2_{\hat{g}(t)}(\chi^{-1}_t(x), \chi^{-1}_s(y))}{D(t-s)} \right)\\
    \leq & C\exp\left( -\frac{d^2_{\hat{g}(t)}(\chi^{-1}_t(x), \chi^{-1}_{t}(y))}{2D(t-s)} \right)\exp\left( \frac{d^2_{\hat{g}(t)}(\chi^{-1}_t(y), \chi^{-1}_{s}(y))}{D(t-s)} \right)\\
    \leq & C\exp\left( -\frac{d^2_{g(t)}(x,y)}{2D(t-s)} \right)\exp\left(\frac{C(\sqrt{t}-\sqrt{s})^2}{D(t-s)}  \right)\\
    \leq & C \exp\left( -\frac{d^2_b(x,y)}{D(t-s)} \right).
\end{align*}

Since $g(t)$ is $(1+2\epsilon)$-close to $b$, the volume can be estimated by
\[ \text{vol} B_{\hat{g}(\frac{t}{2})}\left( \chi^{-1}_t(x), \sqrt{\frac{t-s}{2}} \right)\geq C\text{vol} B\left( x, \sqrt{\frac{t-s}{2}} \right)\geq C(t-s)^{\frac{n}{2}}, \]
where in the last inequality we require $(t-s)$ sufficiently small.
Therefore, we obtain 
\[ K(x, t; y, s)\leq \frac{C}{(t-s)^{n/2}}\exp\left( -\frac{d_b(x,y)^2}{D(t-s)} \right). \]

Integrating the above inequality over $\mathbb{H}^n\backslash B(x, r)$ for any $x\in \mathbb{H}^n$ and any $r>0$ with respect to $y$, we get 
\begin{align*}
    &\int_{\mathbb{H}^n\backslash B(x, r)}K(x, t; y, s)d\mu_s(y)\\
    \leq& C \int_{\mathbb{H}^n\backslash B(x, r)} (t-s)^{-\frac{n}{2}}\exp\left( -\frac{d_b(x,y)^2}{D(t-s)} \right)d\mu_b(y)
\end{align*}

Using hyperbolic polar coordinates centered at $x$:
$$
y=\exp_x(\rho\omega),\qquad
\rho=d_b(x,y),\qquad
\omega\in S^{n-1},
$$
then
$$
\begin{aligned}
&\int_{\mathbb H^n\setminus B_b(x,r)}
K(x,t;y,s)\,d\mu_{g(s)}(y)\\
&\le
C(t-s)^{-n/2}
\int_r^\infty
\exp\!\left(-\frac{\rho^2}{D(t-s)}\right)
\sinh^{n-1}\rho\,d\rho.
\end{aligned}
$$
For $(t-s)$ sufficiently small, we obtain
$$
(t-s)^{-n/2}
\int_r^\infty
\exp\!\left(-\frac{\rho^2}{D(t-s)}\right)
\sinh^{n-1}\rho\,d\rho
\le
C\exp\!\left(-\frac{r^2}{2D(t-s)}\right).
$$
\end{proof}

\vspace{.2in}

\subsection{Normalized Ricci-DeTurck flow}

In contrast to  asymptotically flat manifolds, the hyperbolic metric  expands under the Ricci flow. We therefore resort to the normalized Ricci flow, which is equivalent to the standard Ricci flow up to a time reparametrization.
The normalized Ricci flow consists of a family of metrics $g(t)$ on an $n$-manifold $M$ evolving by
\begin{equation*}
    \partial_tg(t)=-2Ric(g(t))-2(n-1)g(t),
\end{equation*}
with initial condition $g(0)=g_0$.
Via a time-dependent diffeomorphism $\chi_t$ of the form \eqref{diff}, we derive the normalized $h$-flow with respect to a smooth background  metric $h$:
\begin{equation}\label{nrf}
    \partial_tg_{ij}=-2Ric(g(t))_{ij}-2(n-1)g_{ij}+\nabla_i W_j+\nabla_j W_i.
\end{equation}
In what follows, we take the background metric to be $b$ (the hyperbolic metric on $\mathbb{H}^n$) and refer to the flow $g(t)$ in \eqref{nrf} as the normalized $b$-flow.
It is straightforward to verify that 
\[ \bar{g}(t)=(1+2(n-1)t)g(\bar{t})=e^{2(n-1)\bar{t}}g(\bar{t}), \]
where $\bar{t}=\frac{1}{2(n-1)}\ln{(1+2(n-1)t)}$, satisfies the $b$-flow equation \eqref{gflow}.
By Theorem \ref{simon}, we obtain the following corollary for the normalized $b$-flow initiated from a continuous metric.

\begin{corollary}\label{cor3.3}
    If $g_0$ is a continuous metric on $\mathbb{H}^n$ such that $g_0$ is $(1+\epsilon(n))$-close to $b$, then the normalized $b$-flow \eqref{nrf} admits a smooth solution on $\mathbb{H}^n\times(0, T_0]$ for some $T_0(n)>0$ such that
    \begin{enumerate}
    \item \[ \lim_{t\to0}\sup_{\Omega}|e^{2(n-1)t}g(t)-g_0|=0, \quad\forall\Omega\subset\subset \mathbb{H}^n; \]
    \item For all $i\in \mathbb{N}$, there is $C_i>0$ depending only on $n$ so that
    \begin{equation}\label{gde1}
        \sup_{\mathbb{H}^n}|D^ig(t)|\leq \frac{C_i}{({e^{2(n-1)t}-1})^{i/2}};
    \end{equation}  
    \item $e^{2(n-1)t}g(t)$ is $(1+2\epsilon(n))$-close to $b$ for all $t\in (0, T_0]$.
\end{enumerate}
\end{corollary}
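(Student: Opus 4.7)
The plan is to deduce Corollary \ref{cor3.3} directly from Simon's Theorem \ref{simon} applied to the $b$-flow \eqref{gflow} with $h = b$, via the explicit time reparametrization already indicated in the text. First, I would note that the hyperbolic metric $b$ is a homogeneous space of constant sectional curvature $-1$, so $|\tilde\nabla^i \mathrm{Rm}(b)|_b \equiv 0$ for $i \geq 1$ and $|\mathrm{Rm}(b)|_b$ is a fixed constant; in particular, the background condition \eqref{h} is satisfied with constants depending only on $n$. Since $g_0$ is $(1+\epsilon(n))$-close to $b$, Theorem \ref{simon} produces a smooth solution $\bar g(s)$ on $\mathbb{H}^n \times (0, S_0]$ of the $b$-flow with $\bar g(0) = g_0$, together with the properties (1)--(3) of that theorem.

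Next, I would reverse the change of variables described after \eqref{nrf}. Define
\[
 g(t) := e^{-2(n-1)t}\,\bar g(s(t)), \qquad s(t) := \frac{e^{2(n-1)t}-1}{2(n-1)},
\]
for $t \in (0, T_0]$ with $T_0 := \frac{1}{2(n-1)}\ln(1+2(n-1) S_0)$. A direct computation, using the scale invariance of the Ricci tensor and of the DeTurck vector field $W$ (the Christoffel symbols $\Gamma(cg) = \Gamma(g)$, hence $\bar W = c^{-1} W$ when $\bar g = c g$), shows that $g(t)$ solves the normalized $b$-flow equation \eqref{nrf}. This verifies existence and smoothness of $g(t)$ on $\mathbb{H}^n \times (0, T_0]$.

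It remains to translate each of the three properties. For (1): by construction $e^{2(n-1)t} g(t) = \bar g(s(t))$, and $s(t) \to 0$ as $t \to 0$, so statement (1) of Theorem \ref{simon} for $\bar g$ immediately gives (1) for $g$. For (3): again $e^{2(n-1)t} g(t) = \bar g(s(t))$, so the $(1+2\epsilon(n))$-closeness of $\bar g(s)$ to $b$ transfers at once. For (2): using $|D^i \bar g(s)|_b \leq C_i s^{-i/2}$ from \eqref{gde} (with $\tilde\nabla = D$ since $h = b$) and $\bar g(s(t)) = e^{2(n-1)t} g(t)$, one gets
\[
 |D^i g(t)|_b \;\leq\; e^{-2(n-1)t}\, C_i\, s(t)^{-i/2} \;=\; e^{-2(n-1)t}\, C_i\, \bigl(2(n-1)\bigr)^{i/2}\bigl(e^{2(n-1)t}-1\bigr)^{-i/2},
\]
which yields \eqref{gde1} after absorbing dimensional constants into $C_i$.

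The argument is essentially bookkeeping, and I do not anticipate a serious obstacle: the only nontrivial input is Simon's theorem itself, together with the scale and diffeomorphism covariance needed to verify that the reparametrized flow solves \eqref{nrf}. The only point requiring a moment of care is that the DeTurck vector field and its contribution $\nabla_i W_j + \nabla_j W_i$ rescale correctly under $\bar g = e^{2(n-1)t} g$ so that the extra term $-2(n-1) g$ appears with the correct sign when differentiating in $t$; this is where the explicit relation $s(t) = (e^{2(n-1)t}-1)/(2(n-1))$ is used.
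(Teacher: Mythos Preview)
Your proposal is correct and is precisely the argument the paper intends: the text immediately preceding Corollary~\ref{cor3.3} records the reparametrization $\bar g(t)=(1+2(n-1)t)g(\bar t)=e^{2(n-1)\bar t}g(\bar t)$ and then simply invokes Theorem~\ref{simon}, so your translation of Simon's properties (1)--(3) through $g(t)=e^{-2(n-1)t}\bar g(s(t))$ with $s(t)=(e^{2(n-1)t}-1)/(2(n-1))$ is exactly what the author has in mind. The only cosmetic remark is that in your scaling check it is cleaner to note that $\nabla_iW_j+\nabla_jW_i$ (with lower indices) is itself scale invariant, rather than tracking the $c^{-1}$ on $W^k$.
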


Let $K(x, t; y, s)$ be the heat kernel of the $b$-flow $\bar{g}(t)$ as introduced in Section 3.2.    
Direct computation shows that
\[ \Delta_{\bar{g}(t)}=(1+2(n-1)t)^{-1}\Delta_{g(\bar{t})}, \]
\[ \frac{d\bar{t}}{dt}=\frac{1}{1+2(n-1)t}. \]
It therefore follows that
\[ \partial_{\bar{t}}K(x, t; y, s)=\Delta_{g(\bar{t}), x}K(x, t; y, s)+\langle W, \nabla_xK(x, t; y,s )\rangle. \]
Recall that $t=\frac{e^{2(n-1)\bar{t}}-1}{2(n-1)}$. Substitution this relation, we obtain that
\begin{align*}
    \partial_{\bar{t}}K\left(x, \frac{e^{2(n-1)\bar{t}}-1}{2(n-1)}; y, \frac{e^{2(n-1)\bar{s}}-1}{2(n-1)}\right)&=\Delta_{g(\bar{t}),x}K+\langle W, \nabla K\rangle,\\
    \lim_{\bar{t}\to \bar{s}}K\left(x, \frac{e^{2(n-1)\bar{t}}-1}{2(n-1)}; y, \frac{e^{2(n-1)\bar{s}}-1}{2(n-1)}\right)& =\delta_y(x),\\
    \int_{\mathbb{H}^n}K\left(x, \frac{e^{2(n-1)\bar{t}}-1}{2(n-1)}; y, \frac{e^{2(n-1)\bar{s}}-1}{2(n-1)}\right)d\mu_{g_{\bar{s}}}(y)&=e^{-n(n-1)\bar{s}}.
\end{align*}
By Lemma \ref{heat}, we derive the following results.

\begin{corollary}\label{cor3.4}
    Let $g(\bar{t})$ be the normalized $b$-flow on $\mathbb{H}^n\times (0, T_0]$ as stated in Corollary \ref{cor3.3}. Then $K\left(x, \frac{e^{2(n-1)\bar{t}}-1}{2(n-1)}; y, \frac{e^{2(n-1)\bar{s}}-1}{2(n-1)}\right)$ is the unnormalized heat kernel of $g(\bar{t})$. There exist constants $C=C(n)<\infty$ and $D=D(n)<\infty$ such that for any $x, y\in\mathbb{H}^n$, $\bar{t}>0$ sufficiently small and $\frac{e^{2(n-1)\bar{t}}-1}{2}\leq e^{2(n-1)\bar{s}}-1<e^{2(n-1)\bar{t}}-1$, the following estimate holds for all $r>0$:
    \begin{equation*}
    \begin{split}
        &\int_{\mathbb{H}^n\backslash B(x, r)}K\left(x, \frac{e^{2(n-1)\bar{t}}-1}{2(n-1)}; y, \frac{e^{2(n-1)\bar{s}}-1}{2(n-1)}\right)d\mu_{g_{\bar{s}}}(y)\\
        <&Ce^{-n(n-1)\bar{s}}\exp\left( -\frac{r^2}{D(e^{2(n-1)\bar{t}}-e^{2(n-1)\bar{s}})}\right).
        \end{split}
    \end{equation*}
   Define 
    \[ H(x,\bar{t}; y, \bar{s})=e^{2(n-1)(\bar{t}-\bar{s})}K\left(x, \frac{e^{2(n-1)\bar{t}}-1}{2(n-1)}; y, \frac{e^{2(n-1)\bar{s}}-1}{2(n-1)}\right). \]
    Then $H$ satisfies
    \begin{align*}
        &\partial_{\bar{t}}H(x,\bar{t}; y, \bar{s})=\Delta_{g(\bar{t})}H(x,\bar{t}; y, \bar{s})+\langle W, \nabla H(x,\bar{t}; y, \bar{s})\rangle+2(n-1)H(x,\bar{t}; y, \bar{s}),\\
        &\int_{\mathbb{H}^n}H(x,\bar{t}; y, \bar{s})d\mu_{g_{\bar{s}}}(y)= e^{2(n-1)(\bar{t}-\bar{s})-n(n-1)\bar{s}},\\
    \end{align*}
    and 
    \begin{align*}
        &\int_{\mathbb{H}^n\backslash B(x, r)}H(x,\bar{t}; y, \bar{s})d\mu_{g_{\bar{s}}}(y)
        <Ce^{2(n-1)(\bar{t}-\bar{s})-n(n-1)\bar{s}}\exp\left( -\frac{r^2}{D(e^{2(n-1)\bar{t}}-e^{2(n-1)\bar{s}})}\right).
    \end{align*}
\end{corollary}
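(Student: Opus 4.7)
The plan is to obtain Corollary \ref{cor3.4} by transporting Lemma \ref{heat}, which gives the heat kernel tail bound for the (unnormalized) $b$-flow $\bar{g}(t)$, through the time reparametrization $t=\frac{e^{2(n-1)\bar{t}}-1}{2(n-1)}$ and the conformal rescaling $\bar{g}(t)=e^{2(n-1)\bar{t}}g(\bar{t})$. The identification of $K\bigl(x,\frac{e^{2(n-1)\bar{t}}-1}{2(n-1)};y,\frac{e^{2(n-1)\bar{s}}-1}{2(n-1)}\bigr)$ as the heat kernel of $g(\bar{t})$ has already been carried out in the computation preceding the statement, so what remains is (i) the integral tail bound and (ii) the PDE and normalization for the auxiliary quantity $H$.

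First, I would apply Lemma \ref{heat} directly to $\bar{g}$ at the times $t,s$ obtained from $\bar{t},\bar{s}$ via the above substitution. A quick check shows that the hypothesis $t/2\leq s<t$ in Lemma \ref{heat} translates exactly into $(e^{2(n-1)\bar{t}}-1)/2\leq e^{2(n-1)\bar{s}}-1<e^{2(n-1)\bar{t}}-1$, so no extra restriction is introduced. The Gaussian denominator $t-s$ becomes $\frac{1}{2(n-1)}(e^{2(n-1)\bar{t}}-e^{2(n-1)\bar{s}})$, and the factor $\frac{1}{2(n-1)}$ is absorbed into a new constant $D(n)$.

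Next, I would convert the volume form. Since $\bar{g}(s)=e^{2(n-1)\bar{s}}g(\bar{s})$, one has $d\mu_{\bar{g}(s)}=e^{n(n-1)\bar{s}}d\mu_{g(\bar{s})}$; rewriting the tail integral from Lemma \ref{heat} against $d\mu_{g(\bar{s})}$ therefore produces the prefactor $e^{-n(n-1)\bar{s}}$ that appears in the stated bound. The same calculation applied to the whole space recovers the global normalization $\int K\,d\mu_{g_{\bar{s}}}=e^{-n(n-1)\bar{s}}$, as already recorded just above the statement.

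Finally, for $H=e^{2(n-1)(\bar{t}-\bar{s})}K$, I would differentiate the product in $\bar{t}$ and substitute the PDE for $K$ in the $\bar{t}$-variable established just above the statement, namely $\partial_{\bar{t}}K=\Delta_{g(\bar{t}),x}K+\langle W,\nabla_x K\rangle$. Because $e^{2(n-1)(\bar{t}-\bar{s})}$ is a scalar depending only on $\bar{t},\bar{s}$, it commutes with $\Delta$ and $\nabla$, yielding $\partial_{\bar{t}}H=\Delta_{g(\bar{t})}H+\langle W,\nabla H\rangle+2(n-1)H$; the two integral identities for $H$ follow by multiplying the corresponding ones for $K$ by $e^{2(n-1)(\bar{t}-\bar{s})}$. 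The only real difficulty will be bookkeeping: the normalization introduces a time-dependent conformal factor that must be tracked simultaneously in the time variable, the Laplacian, and the volume form. Once the dictionary between $(t,\bar{g})$ and $(\bar{t},g)$ is set up carefully, the argument is a direct change of variables, with all the analytic content already present in Lemma \ref{heat}.
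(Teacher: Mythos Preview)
Your proposal is correct and follows exactly the approach the paper takes: the paper provides no separate proof of Corollary \ref{cor3.4} beyond the sentence ``By Lemma \ref{heat}, we derive the following results,'' together with the computations of $\partial_{\bar t}K$, $\lim_{\bar t\to\bar s}K$, and $\int K\,d\mu_{g_{\bar s}}$ that immediately precede the statement. Your bookkeeping with the time substitution $t=\frac{e^{2(n-1)\bar t}-1}{2(n-1)}$, the volume-form relation $d\mu_{\bar g(s)}=e^{n(n-1)\bar s}d\mu_{g(\bar s)}$, and the product rule for $H$ is exactly what is needed to fill in the omitted details.
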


\vspace{.2in}

\subsection{Scalar curvature lower bound of $C^0$ metrics in the $\beta$-weak sense}

\begin{thm}\label{scalart}
    Suppose $g(t)$ is a smooth solution to the normalized $b$-flow on $\mathbb{H}^n$ satisfying Corollary \ref{cor3.3} with $|g_0-b|_b<\epsilon$. For any fixed $x\in\mathbb{H}^n$, $t>0$ sufficiently small, $\beta\in (0, \frac{1}{2})$ and a sequence $\{t_k\}_{k\geq1}$ defined by $t_k=\frac{1}{2(n-1)}\ln{(\frac{e^{2(n-1)t_{k-1}}+1}{2})}<\frac{3}{4}t_{k-1}$ for $k\geq1$ with $t_0=t$, the following estimate holds:
    \begin{align*}
        R(x, t)\geq& \inf_{C>0}(\liminf_{t\to0}(\inf_{y\in B(x, Ct^{\beta})}R(y, t)))e^{2(n-1)\sum_{i=0}^{\infty}t_i}\prod_{i=0}^{\infty}\left( \frac{e^{2(n-1)t_i}+1}{2}\right)^{-(1+n/2)}\\
        & -C\sum_{i=1}^{\infty}\frac{1}{(e^{2(n-1)t}-1)/2^i}\exp\left( -\frac{(e^{2(n-1)t}-1)^{2\beta-1}}{2^{(2\beta-1)i}D} \right),
    \end{align*}  
    where $C, D$ are constants depending only on $n, \epsilon, \beta$ and $R(x, t)$ denotes the scalar curvature of $g(t)$ at $(x, t)$.
\end{thm}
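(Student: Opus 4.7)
The plan is to derive an iterative Duhamel-type inequality for the scalar curvature under the normalized $b$-flow and peel off one time step at a time down to the initial singular time. First, I would compute the evolution of the scalar curvature $R$ under \eqref{nrf}; adapting the standard Ricci flow formula to the normalized flow with DeTurck vector field yields
\begin{equation*}
\partial_t R = \Delta_{g_t} R + \langle W, \nabla R\rangle + 2|\mathrm{Ric}|^2 + 2(n-1)R.
\end{equation*}
Dropping the nonnegative term $2|\mathrm{Ric}|^2$, the fundamental solution of the remaining linear operator is precisely the heat kernel $H(x,t;y,s)$ constructed in Corollary \ref{cor3.4}. A standard maximum principle argument, comparing $R$ with its Duhamel representation on smooth time slices $[s,t] \subset (0, T_0]$, then gives
\begin{equation*}
R(x,t) \;\geq\; \int_{\mathbb{H}^n} H(x,t;y,s)\, R(y,s)\, d\mu_{g(s)}(y).
\end{equation*}

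Next, applying this inequality between consecutive times $t_k$ and $t_{k+1}$ (the recursion $(e^{2(n-1)t_k}+1)/2 = e^{2(n-1)t_{k+1}}$ places us precisely at the boundary case of Corollary \ref{cor3.4}), I would split the integration into a geodesic ball $B(x,r_k)$ and its complement. On the ball the integrand is bounded below pointwise by $\inf_{B(x,r_k)} R(\cdot,t_{k+1}) \cdot H$, while on the complement I would invoke the universal bound $|R(\cdot,t_{k+1})| \leq C(n)/(e^{2(n-1)t_{k+1}}-1)$ coming from the gradient estimates of Corollary \ref{cor3.3}. Combining these with the total mass $\int H\,d\mu = e^{2(n-1)(t_k-t_{k+1}) - n(n-1)t_{k+1}}$ and the Gaussian tail estimate in Corollary \ref{cor3.4} produces a one-step recursion. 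The choice $r_k = \bigl((e^{2(n-1)t}-1)/2^{k+1}\bigr)^\beta$ is engineered so that the tail exponent becomes exactly $(e^{2(n-1)t}-1)^{2\beta-1}/\bigl(2^{(2\beta-1)(k+1)}D\bigr)$, matching the error series in the theorem; the assumption $\beta<1/2$ then forces super-exponential decay in $k$ and a bounded accumulated radius $\sum_k r_k \leq C(\beta)(e^{2(n-1)t}-1)^\beta$.

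Iterating the recursion from $k=0$ to $\infty$, the multiplicative prefactors telescope:
\begin{equation*}
\prod_{k=0}^{\infty} e^{2(n-1)(t_k - t_{k+1}) - n(n-1)t_{k+1}} = e^{2(n-1)t_0 - n(n-1)\sum_{k\geq 1} t_k},
\end{equation*}
and a short rearrangement using $(e^{2(n-1)t_i}+1)/2 = e^{2(n-1)t_{i+1}}$ shows this coincides with the displayed prefactor $e^{2(n-1)\sum_i t_i} \prod_i ((e^{2(n-1)t_i}+1)/2)^{-(1+n/2)}$. Passing to the limit $k \to \infty$ leaves an inner infimum at times $t_k \to 0$ over a ball of fixed size, which is bounded below by the $\beta$-weak scalar curvature invariant $\inf_{C>0}\liminf_{s\to 0}\inf_{B(x,Cs^\beta)} R(g_s)$ built into the definition, yielding the stated lower bound.

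The principal obstacle is the non-scaling-invariance of the normalized flow, which obstructs the single-step parabolic-rescaling argument of \cite{MR4685089}; instead every multiplicative factor must be tracked exactly through the iteration, and convergence of both the infinite product and the infinite error series must be verified (both hinge on the halving $e^{2(n-1)t_{k+1}}-1 = (e^{2(n-1)t_k}-1)/2$ together with the condition $\beta<1/2$). A subsidiary subtlety is that the Duhamel inequality is valid only at smooth positive times, so the $C^0$ initial datum is not used directly; rather, the $\beta$-weak curvature hypothesis is invoked on the already-iterated finite inequality in the limit $t_k\to 0$.
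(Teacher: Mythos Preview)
Your proposal is correct and follows essentially the same approach as the paper: evolution of $R$ under the normalized $b$-flow, Duhamel inequality with the kernel $H$ of Corollary~\ref{cor3.4}, splitting at each step into a ball and its complement using the Gaussian tail and the curvature bound $|R|\leq c/(e^{2(n-1)t}-1)$, and iterating along the halving sequence $t_k$ with radii $r_k=((e^{2(n-1)t}-1)/2^k)^\beta$. The one place to tighten is the centering of the balls in the iteration: after the first step you are bounding $\inf_{B(x,r_1)}R(\cdot,t_1)$, not $R(x,t_1)$, so the next Duhamel split must be taken about a point $x_1\in B(x,r_1)$ where this infimum is (nearly) attained, producing a drifting sequence $x_k\in B(x_{k-1},r_k)$---this is exactly why your accumulated radius $\sum r_k$ appears, and the paper makes this explicit.
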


\begin{proof}
Let $H(x, t; y, s)$ be as in Corollary \ref{cor3.4}.
Under the normalized $b$-flow $g(t)$,
 the evolution equation for the scalar curvature (Proposition 2.3.9. of \cite{topping}) is
\begin{equation*}
\partial_{t}R(g(t))=\Delta_{g(t)}R(g(t))+2(n-1)R(g(t))+\langle W, \nabla R(g(t))\rangle+2|Ric(g(t))|^2.
\end{equation*}
For any $x\in \mathbb{H}^n$ and any sufficiently small $0<s<t\leq\frac{1}{2(n-1)}\ln{(\frac{3}{2})}$ , we have 
\[ R(x, t)\geq\int_{\mathbb{H}^n}H(x, t; y, s)R(y, s)d\mu_s(y), \]
    where $d\mu_s(y)=d\mu_{g(s)}(y)$.
    Note that (2) in Corollary \ref{cor3.3} gives the bound
\begin{equation}\label{sca}
    |R(g(t))|\leq \frac{c(n)}{e^{2(n-1)t}-1}.
\end{equation}
Choose $t_1=\frac{1}{2(n-1)}\ln{(\frac{e^{2(n-1)t}+1}{2})}<\frac{3}{4}t$ so that $e^{2(n-1)t_1}-1=\frac{e^{2(n-1)t}-1}{2}$. 
Using Corollary \ref{cor3.4} and \eqref{sca}, we get
\begin{align*}
  &  R(x, t)\\
  \geq &\int_{\mathbb{H}^n}H(x, t; y, t_1)R(y, t_1)d\mu_{t_1}(y)\\
  = & \int_{B(x, r_1)} H(x, t; y, t_1)R(y, t_1)d\mu_{t_1}(y)
  +\int_{\mathbb{H}^n\backslash B(x, r_1)}H(x, t; y, t_1)R(y, t_1)d\mu_{t_1}(y)\\
  \geq & a_1 \int_{B(x, r_1)} H(x, t; y, t_1)d\mu_{t_1}(y)
  -\frac{c(n)}{(e^{2(n-1)t}-1)/2}\int_{\mathbb{H}^n\backslash B(x, r_1)}H(x, t; y, t_1)d\mu_{t_1}(y)\\
  = & a_1e^{2(n-1)t}\left( \frac{e^{2(n-1)t}+1}{2} \right)^{-(1+n/2)}\\
 & -\left(a_1+\frac{c(n)}{(e^{2(n-1)t}-1)/2}\right)\int_{\mathbb{H}^n\backslash B(x, r_1)}H(x, t; y, t_1)d\mu_{t_1}(y)\\
  \geq &a_1e^{2(n-1)t}\left( \frac{e^{2(n-1)t}+1}{2} \right)^{-(1+n/2)}-\frac{2c(n)}{(e^{2(n-1)t}-1)/2}\int_{\mathbb{H}^n\backslash B(x, r_1)}H(x, t; y, t_1)d\mu_{t_1}(y)\\
  \geq & a_1e^{2(n-1)t}\left( \frac{e^{2(n-1)t}+1}{2} \right)^{-(1+n/2)}-\frac{2c(n)}{(e^{2(n-1)t}-1)/2}\exp\left( -\frac{r_1^2}{D(e^{2(n-1)t}-1)/2} \right),
\end{align*}
where $r_1>0$ is to be chosen later, 
\[a_1= \inf_{y\in B(x, r_1)}R(y, t_1)=R(x_1, t_1)\leq \frac{c(n)}{(e^{2(n-1)t}-1)/2}\]
for some $x_1\in B(x, r_1)$, and we have used the fact that 
\[e^{2(n-1)t}\left( \frac{e^{2(n-1)t}+1}{2} \right)^{-(1+n/2)}<1\]
in the last inequality. 

Repeating the same procedure, we choose 
\[t_2=\frac{1}{2(n-1)}\ln{(\frac{e^{2(n-1)t_1}+1}{2})}<\frac{3}{4}t_1,\]
and obtain
\[ a_1\geq a_2e^{2(n-1)t_1}\left( \frac{e^{2(n-1)t_1}+1}{2} \right)^{-(1+n/2)}-\frac{2c(n)}{(e^{2(n-1)t}-1)/2^2}\exp\left( -\frac{r_2^2}{D(e^{2(n-1)t}-1)/2^2} \right), \]
where  $a_2=\inf_{y\in B(x_1, r_2)}R(y, t_2)$ for $r_2>0$ to be chosen.

Proceeding inductively, we obtain:
\begin{enumerate}
    \item A sequence $\{t_k\}$ satisfying 
\begin{enumerate}
    \item $t_0=t$,
    \item $t_k=\frac{1}{2(n-1)}\ln{(\frac{e^{2(n-1)t_{k-1}}+1}{2})}<\frac{3}{4}t_{k-1}$ for $k\geq1$;
\end{enumerate}
\item A series of points $\{x_k\}$ with $x_0=x$ and $x_k\in B(x_{k-1}, r_k)$ for $k\geq1$;
\item A sequence $\{a_k\}$  where 
 \[a_k=\inf_{y\in B(x_{k-1}, r_k)}R(y, t_k)=R(x_k, t_k)\]
 and which satisfies
 \begin{equation*}
     \begin{split}
         a_{k-1}\geq &a_ke^{2(n-1)t_{k-1}}\left( \frac{e^{2(n-1)t_{k-1}}+1}{2} \right)^{-(1+n/2)}\\
         &-\frac{2c(n)}{(e^{2(n-1)t}-1)/2^k}\exp\left( -\frac{r_k^2}{D(e^{2(n-1)t}-1)/2^k} \right).
     \end{split}
 \end{equation*}  
\end{enumerate}

Consequently, we obtain
\begin{equation}\label{scal}
\begin{split}
    R(x, t)\geq& a_k e^{2(n-1)\sum_{i=0}^{k-1}t_i}\prod_{i=0}^{k-1}\left( \frac{e^{2(n-1)t_i}+1}{2}\right)^{-(1+n/2)}\\
    & -c(n)\sum_{i=1}^k\frac{1}{(e^{2(n-1)t}-1)/2^i}\exp\left( -\frac{r_i^2}{D(e^{2(n-1)t}-1)/2^i} \right).
\end{split}
\end{equation}
Since $t_k<\frac{3}{4}t_{k-1}<(\frac{3}{4})^kt$, we have  
\[ \sum_{i=0}^{\infty}t_i< \sum_{i=0}^{\infty}(\frac{3}{4})^it=4t, \]
and 
\[ \sum_{i=0}^{\infty}\ln\left( \frac{e^{2(n-1)t_i}+1}{2} \right)<\sum_{i=0}^{\infty}\frac{3(n-1)}{2}t_i<6(n-1)t, \]
both of which are convergent. Moreover,
\[ e^{2(n-1)\sum_{i=0}^{\infty}t_i}\prod_{i=0}^{\infty}\left( \frac{e^{2(n-1)t_i}+1}{2}\right)^{-(1+n/2)}<e^{(2-3n)(n-1)t}<1. \]

We now choose $\{r_i\}$ so that $\sum_{i=1}^{\infty}r_i$ converges and the second term of \eqref{scal} also converges.
Take $r_i=\left( \frac{e^{2(n-1)t}-1}{2^i} \right)^{\beta}$ for some $\beta\in (0, \frac{1}{2})$. Then 
\[\sum_{i=1}^{\infty}r_i=(e^{2(n-1)t}-1)^{\beta}\frac{1}{2^{\beta}-1}<C(n, \beta)t^{\beta},\]
and
\begin{align*}
   & \sum_{i=1}^{\infty}\frac{1}{(e^{2(n-1)t}-1)/2^i}\exp\left( -\frac{r_i^2}{D(e^{2(n-1)t}-1)/2^i} \right)\\
   =& \sum_{i=1}^{\infty}\frac{1}{(e^{2(n-1)t}-1)/2^i}\exp\left( -\frac{(e^{2(n-1)t}-1)^{2\beta-1}}{2^{(2\beta-1)i}D} \right)\\
   <&\sum_{i=1}^{\infty}D^{\gamma}\left( \frac{e^{2(n-1)t}-1}{2^i} \right)^{-1-(2\beta-1)\gamma}\\
   <&C(n, \beta)t^{-1-(2\beta-1)\gamma},
\end{align*}
for any $\gamma>0$ such that $-1-(2\beta-1)\gamma>0$.
Finally, we observe that
\begin{align*}
   & \lim_{k\to\infty}a_k=\lim_{k\to\infty}\inf_{y\in B(x_{k-1}, r_k)}R(y, t_k)\geq \lim_{t\to0}\inf_{B(x, \sum_{i=1}^{\infty}r_i)}R(\cdot, t)\\
   \geq &\inf_{C>0}(\liminf_{t\to0}(\inf_{y\in B(x, Ct^{\beta})}R(y, t))), 
\end{align*} 
  which yields the desired result.  
\end{proof}

\begin{definition}
    Let $M^n$ be a smooth manifold and $g$ be a $C^0$ Riemannian metric on $M$. Given $\beta\in (0, \frac{1}{2})$, we say that $g$ has scalar curvature bounded below by $\kappa\in \mathbb{R}$ in the $\beta$-weak sense at a point $x\in M$ with respect to $(\Phi, g_0, g_t)$ if there exist
    \begin{enumerate}
        \item a diffeomorphism $\Phi: U_x\to \Phi(U_x)$ from a neighborhood $U_x$ of $x$ onto an open subset $\Phi(U_x)\subset\mathbb{H}^n$;
        \item  a $C^0$ metric $g_0$ on $\mathbb{H}^n$ and a normalized $b$-flow $(g_t)_{t\in (0, T]}$ for $g_0$ satisfying Corollary \ref{cor3.3};
    \end{enumerate} 
    such that
    \begin{align*}
        g_0|_{\Phi(U_x)}=\Phi_*g,\\
        \inf_{C>0}\left(\liminf_{t\to0}\left(\inf_{B(\Phi(x), Ct^{\beta})}R(g_t)\right) \right)\geq\kappa.
    \end{align*}
\end{definition}

\begin{remark}
For a $C^0$ metric $g$ and fixed $(\Phi, g_0, g_t)$, the scalar curvature of $g$ bounded below by $\kappa$ in the $\beta$-weak sense at $x$  is denoted by $R_{C^0_{\beta}}(g)(x)\geq\kappa$.
\end{remark}

Corollary \ref{cor3.3} guarantees that for a continuous metric $g_0$ on $\mathbb{H}^n$ satisfying $||g_0-b||_{L^{\infty}(\mathbb{H}^n)}<\epsilon$, there exists a normalized $b$-flow $g(t)$ on $(0, T_0]$ with 
\[|e^{2(n-1)t}g(t)-b|<2\epsilon\]
for $t\in (0, T_0]$.
Thus, for sufficiently small $t$ and $\epsilon$, we have that $\frac{1}{2}b<g(t)<2b$. Below we will refine the estimates of Corollary \ref{cor3.3}.
Let $g(t)$ be the normalized $b$-flow \eqref{nrf} on $\mathbb{H}^n$.
By Lemma 2.1 of \cite{Shiwx}, equation $\eqref{nrf}$ with $h=b$ can be written by
\begin{align}\label{nbf}
    \partial_tg_{ij}=& g^{\alpha\beta}D_{\alpha}D_{\beta}g_{ij}-g^{\alpha\beta}g_{ip}b^{pq}R_{j\alpha q\beta}(b)-g^{\alpha\beta}g_{jp}b^{pq}R_{i\alpha q\beta}(b)-2(n-1)g_{ij}\notag\\
    &+\frac{1}{2}g^{\alpha\beta}g^{pq}(D_ig_{p\alpha}D_jg_{q\beta}+2D_{\alpha}g_{jp}D_{q}g_{i\beta}-2D_{\alpha}g_{jp}D_{\beta}g_{iq}-2D_jg_{p\alpha}D_{\beta}g_{iq}\notag\\&-2D_ig_{p\alpha}D_{\beta}g_{jq}).
\end{align}

Write
\[g_{ij}(t)=h_{ij}(t)+b_{ij}, \quad\quad\quad g^{ij}(t)=b^{ij}+f^{ij}(t),\] 
where all indices are lowered and raised by the metric $b$.
Then $\delta_i^j=g_{ik}g^{kj}$ implies 
\[ f^{ij}=-h^{ij}-h^i_kf^{kj}. \]
Using (2.3) of \cite{gicquaud2025}, we have $|f|<C|h|$.
So we can calculate 
\begin{align*}
   g^{\alpha\beta}D_{\alpha}D_{\beta}g_{ij}=\Delta g_{ij}+f^{\alpha\beta}D_{\alpha}D_{\beta}g_{ij}=\Delta g_{ij}+h*D^2h,
\end{align*}
and
\begin{align*}
    &g^{\alpha\beta}g_{ip}b^{pq}R_{j\alpha q\beta}(b)=(b^{\alpha\beta}+f^{\alpha\beta})(h_{ip}+b_{ip})R_{j\alpha q\beta}(b)\\
    =&-(n-1)b_{ij}-nh_{ij}+b^{kl}h_{kl}b_{ij}+h*h.
\end{align*}
Therefore, \eqref{nbf} is equivalent to
\begin{equation}\label{nbf1}
    \partial_th_{ij}=\Delta h_{ij}+2h_{ij}-2b^{kl}h_{kl}b_{ij}+Q[h]:=-Lh_{ij}+Q[h],
\end{equation}
where 
\[Lh_{ij}=-\Delta h_{ij}-2h_{ij}+2b^{kl}h_{kl}b_{ij}\]
is the linear operator and 
\[ Q[h]=Q^0[h]+D Q^1[h], \]
\[ Q^0[h]=h*h+Dh*Dh, \]
\[  Q^1[h]=h *D h. \]
Here $\Delta$ and $D$ denote the Laplacian operator and the covariant derivative with respect to the background metric $b$. The next theorem shows that exponential decay is preserved for the $b$-flow.

\begin{thm}\label{thm3.7}
    Let \((\mathbb H^n,b)\) be the hyperbolic space of sectional curvature \(-1\), let \(s(x)=d_b(0,x)\), and let \(g(t)\) be the normalized \(b\)-flow from a continuous initial metric \(g_0\) in Corollary \ref{cor3.3}. Put \(h(t)=g(t)-b\). There are \(\varepsilon_*>0\), \(T_*>0\), and \(C<\infty\), depending only on \(n,\tau\), such that if
\[
\|g_0-b\|_{L^\infty(\mathbb{H}^n)}\le \varepsilon_*,\qquad |g_0-b|_b\le C_0e^{-\tau s},
\]
then for every \(0<t\le T_*\) and every \(x\in\mathbb H^n\),
\[
|h(t,x)|_b+t^{1/2}|Dh(t,x)|_b+t|D^2h(t,x)|_b
\le C C_0e^{-\tau s(x)} .
\]
\end{thm}

\begin{proof}
    By Corollary \ref{cor3.3}, $g(t)$ is smooth and close to $b$. Let $u(t)=|h(t)|^2$. 
Lemma 2.2 in \cite{Schnurer} gives
\begin{equation}\label{part}
   \partial_t u\le a^{ij}D_iD_j u+C_1u,\qquad a^{ij}=g^{ij}, 
\end{equation}
with \(a^{ij}\) uniformly elliptic with respect to \(b\), because \(\|g-b\|_\infty\) remains small on a short interval Corollary 3.3.

Choose a smooth proper function \(q\) on \(\mathbb H^n\) with \(q=s+O(1)\), \(|Dq|\le C\), \(|D^2q|\le C\); for example smooth \((1+s^2)^{1/2}\) near the origin. For \(R\ge2\), choose \(\chi_R:\mathbb R\to\mathbb R\) with
\[
\chi_R(r)=r\quad(r\le R),\qquad \chi_R(r)=R+1\quad(r\ge R+2),
\]
and \(0\le\chi_R'\le1\), \(|\chi_R''|\le C\). Put
\[
W_R=e^{\tau\chi_R(q)}.
\]
Then \(W_R\simeq \min(e^{\tau s},e^{\tau R})\), and
\[
|D\log W_R|+|D^2\log W_R|\le C(n,\tau)
\]
uniformly in \(R\).

Set \(v_R=W_R^2u\). Since \(W_R\) is time-independent, \eqref{part} leads to
\[
\partial_t v_R
\le a^{ij}D_iD_jv_R-4a^{ij}D_i(\log W_R)D_jv_R+C_2v_R,
\]
where \(C_2=C_2(n,\tau)\). The coefficients and drift are bounded uniformly in \(R\). 
Applying the noncompact maximum principle (Lemma C.1 in \cite{Schnurer}) is justified as follows. For \(z=e^{-C_2t}v_R\), one has 
\[ \partial_tz\leq L_Rz, \]
where $L_R=a^{ij}D_iD_j-4a^{ij}D_i(\log W_R)D_j$.
So $|L_Rq|\le C(n, \tau)$ uniformly in $R$. Take $A>\sup_{\mathbb{H}^n}|L_Rq|$ and let 
$z_{\delta}=z-\delta(q+At)$.
Then 
\[ (\partial_t-L_R)z_{\delta}=(\partial_t-L_R)z_{\delta}-\delta(A-L_Rq)<0. \]
Since $z_{\delta}(x, t)\rightarrow-\infty$ as $x\to \infty$,
by the maximum principle, we get
\[ \sup_{\mathbb{H}^n}z_{\delta}(\cdot, t)\leq\sup_{\mathbb{H}^n}z_{\delta}(\cdot, 0)\le \sup_{\mathbb{H}^n}z(\cdot, 0).
\]
Letting $\delta\to0$ gives
\[
\sup_{\mathbb H^n}v_R(t)\le e^{C_2t}\sup_{\mathbb H^n}v_R(0).
\]
The initial weighted bound gives
\[
\sup_{\mathbb H^n} W_R^2|g_0-b|^2
\le C(n,\tau)C_0^2,
\]
uniformly in \(R\), because \(q=s+O(1)\). Letting \(R\to\infty\) yields
\[
|h(t,x)|\le C C_0e^{-\tau s(x)}.
\]

For derivatives, fix \(0<t\le T_*\) and work on
\[
Q=B_b(x,2\sqrt t)\times[t/2,t].
\]
On \(Q\), \(e^{-\tau s(y)}\le C(n,\tau,T_*)e^{-\tau s(x)}\). The local parabolic interior estimates for the uniformly parabolic $b$-blow, after rescaling \(Q\) to a unit cylinder and using the small \(L^\infty\) closeness to absorb the \(h*D^2h\) principal perturbation, give
\[
\sqrt t\,|Dh(t,x)|+t|D^2h(t,x)|
\le C\sup_Q |h|.
\]
So we get the derivative estimates.

    
\end{proof}

\begin{thm}\label{thm3.8}
    Under the hypotheses of Theorem \ref{thm3.7}, after possibly reducing \(T_*\), we have
\[
R^{-n/2}\|Dh\|_{L^2(B_b(x,R)\times(0,R^2))}
\le C C_0e^{-\tau s(x)}
\]
for all \(x\in\mathbb H^n\) and \(0<R\le\sqrt{T_*}\).
\end{thm}

\begin{proof}
    By Lemma 2.2 of \cite{Schnurer}, let $u=|g(t)-b|^2=|h|^2$ satisfying
    \begin{equation}\label{uht}
        \partial_t u
\le
g^{ij}D_iD_j u
-c_0|Dh|^2
+C u ,
    \end{equation}
where $c_0>0$.
Choose a cutoff function $\zeta\in C_c^\infty(B_b(x,2R))$ such that
$$
0\le \zeta\le 1,\qquad
\zeta\equiv 1 \ \text{on } B_b(x,R),\qquad
|D\zeta|\le \frac{C}{R}.
$$
Multiplying \eqref{uht} by $\zeta^2$ and integrating with respectto $d\mu_b$, we get
$$
\frac{d}{dt}\int_{\mathbb H^n}\zeta^2u\,d\mu_b
\le
\int_{\mathbb H^n}\zeta^2 g^{ij}D_iD_j u\,d\mu_b
-c_0\int_{\mathbb H^n}\zeta^2|Dh|^2\,d\mu_b
+C\int_{\mathbb H^n}\zeta^2u\,d\mu_b .
$$
For the first term of right hand side, integration by parts gives
$$
\begin{aligned}
\int_{\mathbb H^n}\zeta^2 g^{ij}D_iD_j u\,d\mu_b
&=
-\int_{\mathbb H^n}D_i(\zeta^2g^{ij})D_j u\,d\mu_b  \\
&=
-2\int_{\mathbb H^n}\zeta g^{ij}D_i\zeta D_j u\,d\mu_b
-\int_{\mathbb H^n}\zeta^2D_i g^{ij}D_j u\,d\mu_b .
\end{aligned}
$$
Since
$
|Du|\le C|h||Dh|,
$
we have
$$
\begin{aligned}
\left|
2\int_{\mathbb H^n}\zeta g^{ij}D_i\zeta D_j u\,d\mu_b
\right|
&\le
C\int_{\mathbb H^n}\zeta |D\zeta|\,|h|\,|Dh|\,d\mu_b  \\
&\le
\frac{c_0}{4}\int_{\mathbb H^n}\zeta^2|Dh|^2\,d\mu_b
+
C\int_{\mathbb H^n}|D\zeta|^2|h|^2\,d\mu_b .
\end{aligned}
$$
Moreover, since $Dg=Dh$,
$$
\begin{aligned}
\left|
\int_{\mathbb H^n}\zeta^2D_i g^{ij}D_j u\,d\mu_b
\right|
&\le
C\int_{\mathbb H^n}\zeta^2 |Dh|\,|h|\,|Dh|\,d\mu_b  \\
&=
C\int_{\mathbb H^n}\zeta^2 |h|\,|Dh|^2\,d\mu_b  \\
&\le
C\|h\|_{L^\infty}
\int_{\mathbb H^n}\zeta^2|Dh|^2\,d\mu_b .
\end{aligned}
$$
By Theorem \ref{thm3.7}, $\|h\|_{L^\infty}\le \varepsilon_*$, and taking
$\varepsilon_*>0$ sufficiently small, the last term can be absorbed
into the left-hand side. Hence
$$
\frac{d}{dt}\int_{\mathbb H^n}\zeta^2|h|^2\,d\mu_b
+
c\int_{\mathbb H^n}\zeta^2|Dh|^2\,d\mu_b
\le
C\int_{\mathbb H^n}|D\zeta|^2|h|^2\,d\mu_b
+
C\int_{\mathbb H^n}\zeta^2|h|^2\,d\mu_b .
$$
Since $|D\zeta|\le C/R$, this implies
$$
\frac{d}{dt}\int_{\mathbb H^n}\zeta^2|h|^2\,d\mu_b
+
c\int_{\mathbb H^n}\zeta^2|Dh|^2\,d\mu_b
\le
C(R^{-2}+1)
\int_{B_b(x,2R)}|h|^2\,d\mu_b .
$$
Integrating from $0$ to $R^2$, and using
$\zeta\equiv 1$ on $B_b(x,R)$, we obtain
$$
\int_0^{R^2}\int_{B_b(x,R)}|Dh|^2\,d\mu_b\,dt
\le
C\int_{B_b(x,2R)}|h_0|^2\,d\mu_b
+
C(R^{-2}+1)
\int_0^{R^2}\int_{B_b(x,2R)}|h|^2\,d\mu_b\,dt .
$$
On $B_b(x,R)\times(0,R^2)$, $R\leq\sqrt{T_*}$, we have \(e^{-\tau s(y)}\le C(n,\tau,T_*)e^{-\tau s(x)}\).
By Theorem \ref{thm3.7}, 
\[ \int_{B_b(x,2R)}|h_0|^2\,d\mu_b
+
R^{-2}\int_0^{R^2}\int_{B_b(x,2R)}|h|^2\,d\mu_b\,dt\leq CC_0^2e^{-2\tau s(x)}R^n, \]
which finishes the proof.
\end{proof}

\vspace{.2in}

\section{$C^0$ local mass under the normalized Ricci-DeTurck flow}

\begin{proposition}\label{prop51}
    Let $(M, g)$ be a $C^0_{\tau}$-asymptotically hyperbolic manifold with a fixed coordinate chart at infinity $\Phi$ and  $\tau>\frac{n}{2}$.
    Let $g_0$ be a continuous metric on $\mathbb{H}^n$ such that $g_0=\Phi_*g$ on $A(0, 0.85r, 11.5r)$ for $r>0$ sufficiently large and 
    \[ ||g_0-b||_{L^{\infty}(\mathbb{H}^n)}<\epsilon, \]
    for some $\epsilon<1$.
    Let $g_t=g(t)$ be the normalized $b$-flow as stated in Theorem \ref{thm3.7}.
    
       Suppose $R_{C^0_{\beta}}(g)(x)\geq-n(n-1)$, for $\beta\in (0, \frac{1}{2})$, and for any $\Phi(x)\in A(0, 0.85r, 11.5r)$. There exists a constant $C(n, \beta, \tau)$ such that, if $\varphi^1, \varphi^2$ are any two positive  smooth functions  with nonzero integrals over $(0.9, 1.1)$, then for all $r'\in [\frac{1.1}{0.9}r, 10r]$,  we have
    \[ M_{C^0}(g_{t}, \varphi^1, r')-M_{C^0}(g_{t}, \varphi^2, r)\geq -C(n, \beta,\tau)(t^{-1}e^{0.9(n-2\tau)r}+e^{11nr}t^{\lambda}), \]
    for $t$ sufficiently small and any $\lambda>0$.
\end{proposition}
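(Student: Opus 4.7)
The plan is to exploit the smoothness of $g_t$ for $t>0$ (Theorem \ref{lam3.3}) to rewrite $M_{C^0}$ in terms of $M_{C^2}$ via the averaging identity
\[M_{C^0}(g_t, \varphi, R)=\frac{1}{\int_{0.9}^{1.1}\varphi(l)\,dl}\int_{0.9}^{1.1}\varphi(l)\,M_{C^2}(g_t, Rl, V^0)\,dl,\]
and then apply Lemma \ref{c2scalar} to each pairwise difference $M_{C^2}(g_t,\lambda,V^0)-M_{C^2}(g_t,\mu,V^0)$. This reduces the question to lower-bounding a weighted double integral whose main ingredient is $\int_{A(0,\mu,\lambda)}[V^0(R_{g_t}+n(n-1))+Q]\,d\mu_b$, with $Q$ quadratic in $e_t:=g_t-b$ and its first two covariant derivatives. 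By the hypotheses on the supports of $\varphi^1,\varphi^2$ and the range of $r'$, the domain of integration is contained in $A(0,0.9r,11r)$, which sits strictly inside $A(0,0.85r,11.5r)$ where $g_0=\Phi_*g$.

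The scalar curvature piece is handled by Theorem \ref{scalart}: from $R_{C^0_\beta}(g)\ge -n(n-1)$ we obtain a pointwise bound $R_{g_t}+n(n-1)\ge -\omega(t)$ on the annulus, where $\omega(t)\to0$ as $t\to0$ and is controlled by the Gaussian tail series appearing in that theorem. Since $V^0\lesssim r$ there and the coarea relation $d\mu_b=s^{n-1}(V^0)^{-1}\,ds\,d\sigma$ gives $\mathrm{Vol}_b(A(0,0.9r,11r))\lesssim r^{n-1}$, this yields a contribution bounded below by $-Cr^n\omega(t)$, subdominant to the $Q$-bound below once $t$ is sufficiently small.

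The main step is estimating $\int|Q|\,d\mu_b$. The pointwise bound $|Q|\lesssim(|V^0|+|dV^0|)(|De_t|^2+|e_t|^2)+|V^0||e_t||D^2e_t|$ reduces the task to controlling $|D^ke_t|$ for $k=0,1,2$ on the inner annulus. The global estimate from Theorem \ref{lam3.3}, $|D^ke_t|\lesssim t^{-k/2}\epsilon$, sees only the ambient smallness $\epsilon$ and is far too crude. The key point is that on $A(0,0.85r,11.5r)$ the asymptotic hyperbolicity of $g$ gives $|g_0-b|_b\lesssim\rho^\tau\lesssim r^{-\tau}$; exploiting this locally I aim to upgrade the bounds on $A(0,0.9r,11r)$ to
\[|e_t|\lesssim r^{-\tau},\qquad|De_t|\lesssim t^{-1/2}r^{-\tau},\qquad|D^2e_t|\lesssim t^{-1}r^{-\tau},\]
modulo contributions from the complement of $A(0,0.85r,11.5r)$ that are controlled by the Gaussian tail $\epsilon\exp(-cr^2/t)$. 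These then give $|Q|\lesssim r\cdot r^{-2\tau}t^{-1}$ pointwise and, after integrating over a volume $\lesssim r^{n-1}$, the desired $\int|Q|\,d\mu_b\lesssim r^{n-2\tau}t^{-1}$.

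The principal obstacle is producing these local gradient estimates, since Theorem \ref{lam3.3} is stated globally. My strategy is to decompose $h_0:=g_0-b$ as $h_0=\chi h_0+(1-\chi)h_0$ with $\chi$ a smooth cutoff equal to $1$ on $A(0,0.9r,11r)$ and vanishing outside $A(0,0.85r,11.5r)$, apply the fixed-point construction of Theorem \ref{lam3.3} to the initial datum $\chi h_0$ (whose $L^\infty$-norm is $O(r^{-\tau})$), and compare its output to $g_t$ via the contraction estimate on $F_{h_0}$. The discrepancy is governed by the heat-kernel evaluation $\int\bar K(x,t;y,0)(1-\chi(y))h_0(y)\,d\mu_b(y)$, which by Lemma \ref{heat} is $O(\epsilon\exp(-cr^2/t))$ for $x\in A(0,0.9r,11r)$. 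This reduces the whole argument to the Gaussian tail estimates of Lemma \ref{heat} and Corollary \ref{cor3.4}, already engineered in Section 3.
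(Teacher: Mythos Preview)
Your overall plan---reduce $M_{C^0}$ to $M_{C^2}$, apply Lemma~\ref{c2scalar}, then bound $\int V^0(R_{g_t}+n(n-1))\,d\mu_b$ via Theorem~\ref{scalart} and $\int|Q|\,d\mu_b$ via derivative estimates on $e_t$---is exactly the paper's. Two places where the paper is shorter. First, instead of carrying a weighted double integral, the paper invokes the mean value theorem for integrals (valid since $\varphi^1,\varphi^2>0$) to produce $l_1,l_2\in(0.9,1.1)$ with $M_{C^0}(g_t,\varphi^1,r')=M_{C^2}(g_t,r'l_1,V^0)$ and $M_{C^0}(g_t,\varphi^2,r)=M_{C^2}(g_t,rl_2,V^0)$, so a single application of Lemma~\ref{c2scalar} on $A(0,rl_2,r'l_1)\subset A(0,0.9r,11r)$ suffices. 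This is cosmetic.

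Second, and this is the substantive point: the paper does not localize the gradient estimates at all. It simply applies \eqref{grad} with $\|g_0-b\|_{L^\infty(\mathbb{H}^n)}$ of order $r^{-\tau}$. This is legitimate because the hypothesis pins down $g_0$ only on $A(0,0.85r,11.5r)$, where $|\Phi_*g-b|_b=O(\rho^\tau)=O(r^{-\tau})$; extending by $g_0\equiv b$ outside a slightly larger annulus makes the \emph{global} $L^\infty$-norm already $O(r^{-\tau})$, and \eqref{grad} gives $|D^ke_t|\le c_kt^{-k/2}r^{-\tau}$ everywhere. Your cutoff-and-compare scheme is therefore unnecessary and, as sketched, incomplete: you control only the linear piece $S[(1-\chi)h_0]$ pointwise via heat-kernel decay, but the nonlinear difference $VQ[h]-VQ[\tilde h]$ is an integral over all of $\mathbb{H}^n\times(0,t)$, so the contraction estimate on $F_{h_0}$ yields only a \emph{global} $X_T$-bound by $\|S[(1-\chi)h_0]\|_{X_T}$, which is of order $\epsilon$, not $r^{-\tau}$. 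A genuine localization would require iterating in a localized norm---doable, but entirely avoidable by choosing the good extension.
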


\begin{proof}
    Let $s=|x|_{b}=\ln\left(\frac{1+|x|_{\delta}}{1-|x|_{\delta}}\right)$ be the geodesic distance in $\mathbb{H}^n$ and $V^0=\rho^{-1}-1=\cosh s$. 
    For $s$ sufficiently large, we have $|V^0|+|dV^0|=O(\rho^{-1})$.
    By \eqref{grad} and Theorem \ref{thm3.7}, $|e|=O(\rho^{\tau})\le  Ce^{-\tau s}$ on $A(0, 0.9r, 11r)$, we have, for $e_t=g_t-b$,
    \[ |D e_t|\leq Ct^{-\frac{1}{2}}e^{-\tau s}, \]
    \[ |D^2e_t|\leq Ct^{-1}e^{-\tau s}, \]
    and 
    \[ |e_t|\leq Ce^{-\tau s}. \]
    By Definition \ref{c2mass}, since $g_t$ is smooth for $t>0$, there exist $l_1, l_2\in (0.9, 1.1)$ such that 
    \[ M_{C^0}(g_{t}, \varphi^1, r')= \frac{\int_{0.9}^{1.1}\varphi^1(l)M_{C^2}(g_t,  r'l, V^0)dl}{\int_{0.9}^{1.1}\varphi^1(l)dl}=M_{C^2}(g_t, r'l_1, V^0),\]
    \[ M_{C^0}(g_t, \varphi^2, r)= M_{C^2}(g_t, rl_2, V^0). \]
We see that $r'l_1\in (1.1r, 11r)$, $rl_2\in (0.9r, 1.1r)$. 
Hence, by Lemma \ref{c2scalar}, we get, 
\begin{align*}
    &M_{C^0}(g_t, \varphi^1, r')-M_{C^0}(g_t, \varphi^2, r)\\
    =& M_{C^2}(g_t, r'l_1, V^0)-M_{C^2}(g_t, rl_2, V^0)\\
    =& \int_{A(0, rl_2, r'l_1)}[V^0(R_{g_t}+n(n-1))+Q]d\mu_b.
\end{align*}
where we have that
\begin{align*}
   & Q\lesssim (|V^0|+|dV^0|)(|De_t|^2+|e_t|^2)+|V^0||e_t||D^2e_t|\\
    \lesssim & \rho^{-1}(t^{-1}\rho^{2\tau}+\rho^{2\tau})+\rho^{\tau-1}\rho^{\tau}t^{-1}\\
    \lesssim &  e^{(1-2\tau)s}t^{-1},
\end{align*}  
and
\begin{equation*}
    \begin{split}
    &\int_{A(0, rl_2, r'l_1)}Qd\mu_b\\
    \lesssim&t^{-1}\int_{0.9r}^{11r}e^{(n-2\tau)s}ds\lesssim t^{-1}e^{0.9(n-2\tau)r}.
    \end{split}
\end{equation*} 
By Theorem \ref{scalart}, for $\beta\in (0, \frac{1}{2})$, we have
\[ R(g_t)\geq -n(n-1)-C(n, \tau, \beta)t^{\lambda}, \]
for any $\lambda>0$.
   Therefore,  we obtain that
   \begin{align*}
       &M_{C^0}(g_t, \varphi^1, r')-M_{C^0}(g_t, \varphi^2, r)\\
       \geq & -C (t^{-1}e^{0.9(n-2\tau)r}+e^{11nr}t^{\lambda}),
   \end{align*}
   where $C=C(n, \tau, \beta)$ and $t$ sufficiently small.
\end{proof}

\begin{thm}\label{thm5.2}
    For any $r>r_0(n)$, let $\varphi: \mathbb{R}\to\mathbb{R}_{\geq0}$ be a smooth cutoff function with $\text{supp} \varphi\subset(a, b)\subset\subset(0.9, 1.1)$, where $a>0.9$, $b<1.1$. 
    For $\theta\in (0, \frac{2d_{ab}^2}{n})$, there exists a unique smooth function 
    $\phi_{\theta}:\mathbb{R}\times [0, \theta]\to \mathbb{R}$ satisfying
    \begin{equation}\label{laps}
        \begin{cases}
            (\partial_t+\Delta)\phi_{\theta}(s, t)=f(s)\phi_{\theta}(s, t), \quad\quad&\text{for}\quad(x, t)\in \mathbb{H}^n\times (0, \theta);\\
            \phi_{\theta}(s, \theta)=\cosh{(s)}\varphi(\frac{s}{r}), \quad \quad &\text{for}\quad x\in \mathbb{H}^n; 
        \end{cases}
    \end{equation}
    where $s=|x|_b$ is the geodesic distance and $f(s)$ is a smooth function such that
    \begin{equation*}
        f(s)\begin{cases}
            = 2n-2+(n-1)(\sinh s)^{-2}-2(\cosh s)^{-2}, \quad\quad &\text{for }\quad s\geq 0.9r;\\
             \in [2n-4, 2n-1], \quad \quad &\text{for}\quad s<0.9r;
        \end{cases}
    \end{equation*}
    and we require that 
    \[ |f'(s)|\leq Cr^{-1},\quad |f''(s)|\leq Cr^{-2}, \]
    for $s<0.9r$
    and $ d_{ab}=\min\{ a-0.9, 1.1-b \}$.
For any $(x, t)\in A(0, 0.9r, 1.1r)\times (0, \theta]$, we have $\phi_{\theta}(s, t)\geq0$ and the estimates
\[ |\phi_{\theta}|\leq C(n, \varphi)e^{s}, \quad |\phi'_{\theta}|\leq C(n, \varphi)e^{s}, \quad |\phi''_{\theta}|\leq C(n, \varphi)e^{s}, \]
where prime denotes the partial derivative with respect to the geodesic distance.
    
     Moreover, for $(x, t)\in \partial A(0, 0.9r, 1.1r)\times [0, \theta]$, we have 
    \begin{equation*}
    |\phi_\theta(x,t)|
\le C(n,\varphi)\theta^{-n/2}
\exp\left(-\frac{d_{ab}^2r^2}{c\theta}\right)e^{1.1nr},
    \end{equation*}
    \begin{equation*}
    |\partial_s\phi_\theta(x,t)|
\le C(n,\varphi)\theta^{-(n+1)/2}
\exp\left(-\frac{d_{ab}^2r^2}{c\theta}\right)e^{1.1nr},
    \end{equation*}
    \begin{equation*}
        |\partial_s^2\phi_\theta(x,t)|
\le C(n,\varphi)\theta^{-(n+2)/2}
\exp\left(-\frac{d_{ab}^2r^2}{c\theta}\right)e^{1.1nr}.
    \end{equation*}
    where $c$ is a constant. 
\end{thm}

\begin{proof}
    Since for $r>r_0(n)$,
    \[f(s)< 2n-2+(n-1)(\sinh(0.9r))^{-2}<2n-1\]
    and 
    \[f(s)>2n-4,\]
    by Theorem 24.40 of \cite{Chow}, there is a unique smooth positive solution $H(x,t; y, v)$ to 
    \begin{equation*}
        \begin{cases}
            (\partial_t-\Delta_x+f(|x|_b))H(x, t; y,v)=0;  \\
            \lim_{t\to v}H(x, t; y, v)=\delta_y;
        \end{cases}
    \end{equation*}
    for $t>v$. For fixed $(x, t)\in\mathbb{H}^n\times(0, \theta) $, there holds
    \begin{equation*}
        \begin{cases}
            (\partial_v+\Delta_y-f(|y|_b))H(x, t; y, v)=0;\\
             \lim_{v\to t}H(x, t; y,v)=\delta_x.
        \end{cases}
    \end{equation*}
    For brevity, we write $\phi_{\theta}(x, t)=\phi_{\theta}(|x|_b,t)$.
    Then we have
    \[ \phi_{\theta}(x, t)=\int_{\mathbb{H}^n}H(y, \theta; x, t)\cosh{(|y|_b)}\varphi(\frac{|y|_b}{r})d\mu_b(y). \]
    Since $\cosh(s)\varphi(\frac{s}{r})$  and $f$ are radial, uniqueness implies that
$\phi_\theta$ is radial. Using barrier $Ae^{B\tau}\cosh s$ and the maximum principle gives
$$|\phi_\theta|\le C(n,\varphi)e^{s}.$$

For $p=\partial_s\phi_\theta$, differentiating the radial equation gives
$$
(\partial_t+\Delta_b)p
=\bigl(f+(n-1)\operatorname{csch}^2s\bigr)p+f'\phi_\theta .
$$
The coefficients are bounded and $p(\cdot,\theta)=O(e^s)$, hence the maximum principle gives $|p|\le C(n,\varphi)e^{s}$. Similarly, we get the same estimate for the second derivative.
By Theorem 26.25 of \cite{Chow} and Corollary 2.7 of \cite{paula19}, for $k=0, 1, 2, 3,$
\[ H(y, \theta; x, t)\leq C(n)(\theta-t)^{-\frac{n}{2}}\exp\left(-\frac{d_b^2(x, y)}{c(\theta-t)} \right), \]
 \[ |D_kH(y, \theta; x, t)|\leq C(n)(\theta-t)^{-\frac{n+k}{2}}\exp\left(-\frac{d_b^2(x, y)}{c(\theta-t)} \right), \]
for $(x, t)\in \partial A(0, 0.9r, 1.1r)\times[0, \theta]$,
there holds
\begin{align*}
    &\phi_{\theta}(|x|_b, t)\leq C(n)\int_{\mathbb{H}^n}(\theta-t)^{-\frac{n}{2}}\exp\left(-\frac{d_b^2(x, y)}{c(\theta-t)} \right)\cosh{(|y|_b)}\varphi(\frac{|y|_b}{r})d\mu_b(y)\\
    \leq&  C(n, \varphi)e^{1.1r}\int_{A(0, ar, br)}(\theta-t)^{-\frac{n}{2}}\exp\left(-\frac{d_b^2(x, y)}{c(\theta-t)} \right)d\mu_b(y)\\
     \leq&  C(n, \varphi)e^{1.1r}(\theta-t)^{-\frac{n}{2}}\exp\left(-\frac{d_{ab}^2r^2}{c(\theta-t)} \right)\text{vol}(A(0, 0.9r, 1.1r))\\
     \leq & C(n, \varphi)e^{1.1r}\theta^{-\frac{n}{2}}\exp\left(-\frac{d_{ab}^2r^2}{c\theta} \right)e^{1.1(n-1)r},
\end{align*}
where in the last line we used the fact that $(\theta-t)^{-\frac{n}{2}}\exp\left(-\frac{d_{ab}^2}{c(\theta-t)} \right)$ is decreasing in $t$ for $t<\theta<\frac{2d_{ab}^2}{n}$.
Similarly, we get  the other two derivative estimates.
\end{proof}

\begin{lemma}
    For any function $u\in C^{\infty}(\mathbb{H}^n)$ with 
    \[ u(x)=u(|x|_b)=u(s), \]
    where we denote $s=|x|_b$ as the geodesic distance, we have 
    \[ \Delta u(s)=u''+(n-1)u'\frac{\cosh s}{\sinh s}, \]
    where the prime denotes the derivative with respect to $s$.
\end{lemma}

\begin{proof}
    Since $b=\rho^{-2}\delta$ is conformally flat, so
\[ \Delta u=\rho^2\Delta_{\delta}u+(n-2)\rho x^i\partial_iu. \]
As $\partial_ks=\frac{x^k}{\rho^2\sinh s}$, direct computation gives
\[ x^i\partial_iu=u'\sinh s, \]
\[ \Delta_{\delta}u=\rho^{-2}u''+u'\rho^{-1}[\sinh s +(n-1)(\rho\sinh s)^{-1}]. \]
Then we get
\[ \Delta u=u''+u'\rho[\sinh s +(n-1)(\rho\sinh s)^{-1}]+(n-2)u'\rho\sinh s, \]
and since $1+\rho\sinh^2s=\cosh s$, so
\[ \Delta u=u''+(n-1)u'\frac{\cosh s}{\sinh s}. \]
    
\end{proof}

\begin{corollary}\label{cor5.4}
     For any function $u\in C^{\infty}(\mathbb{H}^n)$ with 
    \[ u(x)=u(|x|_b)=u(s), \] 
    we have 
    \[ \Delta u'=(\Delta u)'+(n-1)u'(\sinh s)^{-2}. \]
\end{corollary}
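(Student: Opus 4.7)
The plan is straightforward: apply the preceding lemma once to the radial function $u'$, then differentiate the formula for $\Delta u$ with respect to $s$, and compare the two resulting expressions. Since $u$ is radial, its derivative $u'(s)$ is again a smooth radial function on $\mathbb{H}^n$, so the lemma applies to it.

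First, I would apply the lemma with $u'$ in place of $u$ to obtain
\[ \Delta u' = (1+s^2)\, u''' + [ns + (n-1)s^{-1}]\, u''. \]
Second, I would differentiate the identity $\Delta u = (1+s^2)u'' + [ns + (n-1)s^{-1}]u'$ with respect to $s$:
\[ (\Delta u)' = 2s\, u'' + (1+s^2)\, u''' + [n - (n-1)s^{-2}]\, u' + [ns + (n-1)s^{-1}]\, u''. \]

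Subtracting the first expression from the second, the terms $(1+s^2)u'''$ and $[ns+(n-1)s^{-1}]u''$ cancel, leaving
\[ (\Delta u)' - \Delta u' = 2s\, u'' + [n - (n-1)s^{-2}]\, u', \]
which rearranges to the claimed identity. No step here is delicate; it is a one-line verification using the preceding lemma, so there is no real obstacle to speak of — the only thing to be careful about is checking that the $u'''$ and mixed $u''$ coefficients really cancel, which they do.
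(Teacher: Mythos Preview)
Your proposal is correct and is exactly the direct computation the paper has in mind; the paper states this corollary without any proof, relying on the preceding lemma just as you do. The only minor caveat is that $u'$ need not extend smoothly across the origin, but since the identity is only asserted (and only used) for $s>0$, the pointwise application of the lemma's formula to $u'$ is entirely legitimate.
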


\begin{thm}\label{thm5.5}
 Let $(M, g)$ be a $C^0_{\tau}$-asymptotically hyperbolic manifold with a fixed coordinate chart at infinity $\Phi$ and $\tau>\frac{n}{2}$.
 Let $\varphi:\mathbb{R}\to\mathbb{R}_{\geq0}$ be a smooth cutoff function with $\int_{0.9}^{1.1}\varphi(l)dl\not=0$ and $supp\varphi\subset(a, b)\subset\subset(0.9, 1.1)$.
Let $g_t$ be the normalized $b$-flow as in Proposition \ref{prop51}.
Let $\phi(x, t)=\phi(|x|_b, t)\in C^{\infty}(\mathbb{H}^n\times[0, \theta])$ be a function satisfying
\begin{equation*}
    \begin{cases}
        (\partial_t+\Delta)\phi=[2n-2+(n-1)(\sinh s)^{-2}-2(\cosh s)^{-2}]\phi, &\quad\text{on}\quad A(0, 0.9r, 1.1r)\times (0, \theta);\\
        \phi(s, \theta)=\cosh s\varphi(\frac{s}{r}), &\quad\text{for}\quad s\in[0.9r, 1.1r];
    \end{cases}
\end{equation*}
where $\theta<T_*$.
Then, for any $r>r_0(n)$ sufficiently large and $\theta$ sufficiently small, we have 
\[ \int_0^{\theta}\left| \frac{d}{dt}\left[M_{C^0}(g_t, \varphi_{\theta}(\cdot, t), r)r\int_{0.9}^{1.1}\varphi_{\theta}(l, t)dl\right] \right|dt\leq  C(n, \varphi)e^{0.9(n-2\tau)r}, \]
where  
\begin{equation}\label{varphithe}
    \begin{cases}
        \varphi_{\theta}(s, t)=\varphi_{\theta}(|x|_b, t)=(\cosh (rs))^{-1}\phi(sr, t),\quad&\text{for}\quad (s, t)\in (0.9, 1.1)\times[0, \theta];\\
        \varphi_{\theta}(s, \theta)=(\cosh (rs))^{-1}\phi(sr, \theta)=\varphi(s), \quad&\text{for}\quad s\in (0.9, 1.1).
    \end{cases}
\end{equation}
\end{thm}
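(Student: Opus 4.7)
My plan is to differentiate the time-dependent quantity
\[ \mathcal{M}(t) := r\left(\int_{0.9}^{1.1}\varphi_\theta(l,t)\,dl\right) M_{C^0}(g_t,\varphi_\theta(\cdot,t),r) \]
in $t$ and show that $d\mathcal{M}/dt$ reduces, modulo exponentially small boundary terms on $\partial A := \partial A(0,0.9r,1.1r)$, to a bulk quadratic expression in $e := g_t - b$ whose time integral is $O(r^{n-2\tau})$.

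First I rewrite $\mathcal{M}(t)$ in terms of $\phi$. Substituting $\varphi_\theta(s/r,t) = (1+s^2)^{-1}\phi(s,t)$ and $V^0 = \sqrt{1+s^2}$ into Definition \ref{defc0} collapses the coefficients and yields
\[ \mathcal{M}(t) = \int_{\partial A}(1+s^2)^{-1/2}\phi\,(e_{ij}\nu^i\nu^j - b^{ij}e_{ij})\,d\mu_b + \int_A\bigl[(n-2)s^{-1}\phi + \phi'\bigr]b^{ij}e_{ij}\,d\mu_b + \int_A(s^{-1}\phi - \phi')s^{-2}x^ix^je_{ij}\,d\mu_b. \]
Differentiating in $t$, I combine $\partial_t e_{ij} = \Delta e_{ij} + 2e_{ij} - 2(b^{kl}e_{kl})b_{ij} + Q[e]$ from \eqref{nbf1} with the PDE $(\partial_t+\Delta)\phi = 2s\phi' + \phi[3n-4+(n-1)s^{-2}]$. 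The $\Delta e_{ij}$ pieces in the bulk are integrated by parts to transfer $\Delta$ onto the $\phi$-coefficients; for the $b^{ij}e_{ij}$ piece this is immediate since $b$ is parallel, while for the $s^{-2}x^ix^je_{ij}$ piece I use $D_k x^i = V^0\delta_{ki}$. Using the radial Laplacian formula and Corollary \ref{cor5.4} to commute $\partial_s$ with $\Delta$, the PDE for $\phi$ forces the sum of the linear-in-$e$ bulk contributions to vanish identically; this is precisely what fixes the reaction coefficient $3n-4+(n-1)s^{-2}$.

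What remains in $d\mathcal{M}/dt$ is (a) a bulk quadratic term $\int_A(|\phi|+s|\phi'|)|Q[e]|\,d\mu_b$ and (b) boundary integrals on $\partial A$ of products of $\phi$ or $\phi'$ with $e$ or $De$. For (a), Theorem \ref{thm5.2} gives $|\phi|\lesssim r$ and $|\phi'|\lesssim 1$. The contributions of $|e|^2$ and $|De|^2$ from $Q^0[e]$ are controlled using $\mathrm{vol}_b(A)\lesssim r^{n-1}$, $|e|=O(s^{-\tau})$, and the integrated Koch--Lamm bound from Theorem \ref{lam3.3}: covering $A\times(0,\theta)$ by parabolic cylinders of size $\sqrt{\theta}\times\theta$ and applying the localized $X_T$-estimate with $\|e_0\|_{L^\infty(\mathrm{cylinder})}\lesssim r^{-\tau}$ gives $\int_0^\theta\int_A(|e|^2+|De|^2)\,d\mu_b\,dt\lesssim r^{n-1-2\tau}$; multiplying by the $r$ from $|\phi|$ yields $\lesssim r^{n-2\tau}$. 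The divergence piece $DQ^1[e] = D(e\ast De)$ is integrated by parts once against the $\phi$-coefficient, producing $\int_0^\theta\int_A|D\phi||e||De|\,d\mu_b\,dt$, which is bounded by Cauchy--Schwarz and the same $X_T$ estimate. For (b), the Gaussian decay in Theorem \ref{thm5.2} gives $|\phi|,|\phi'|\lesssim r^n\theta^{-n/2}\exp(-d_{ab}^2 r^2/(c\theta))$ on $\partial A$. Under $\theta < T'\lesssim r^{-\eta}$ this decays super-polynomially in $r$ and dominates all polynomial factors from the surface measure and the pointwise bounds $|e|\lesssim r^{-\tau}$, $|De|\lesssim t^{-1/2}r^{-\tau}$, so the boundary contributions are negligible after integrating in $t$.

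The main obstacle is the exact cancellation of linear-in-$e$ bulk terms. The coefficients of $b^{ij}e_{ij}$ and of $s^{-2}x^ix^je_{ij}$ receive contributions from (i) $\partial_t \phi$ and $\partial_t \phi'$ in the original bulk coefficients, (ii) $\Delta$ applied to those coefficients after integration by parts, (iii) the zero-order linear piece $2e_{ij} - 2(b^{kl}e_{kl})b_{ij}$ of $\partial_t e$, and (iv) the nontrivial commutator between $\Delta$ and the non-parallel tensor $s^{-2}x^ix^j$. Reconciling all four contributions uses the identity $\Delta(s^{-1}) = (2-n)s^{-1} + (3-n)s^{-3}$, Corollary \ref{cor5.4}, and $D_k x^i = V^0\delta_{ki}$; these identities assemble exactly to produce the reaction term of the PDE for $\phi$.
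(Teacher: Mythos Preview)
Your strategy matches the paper's proof: decompose $\mathcal{M}(t)$ into boundary plus two bulk integrals, differentiate using \eqref{nbf1}, integrate $\Delta e$ by parts onto the $\phi$-coefficients, and verify via the radial Laplacian identities, Corollary \ref{cor5.4}, and $D_kx^i=V^0\delta_{ki}$ that the PDE for $\phi$ forces the linear-in-$e$ bulk coefficients to vanish, leaving only the quadratic $Q$-terms (handled by the Koch--Lamm $X_T$ bound plus a covering argument) and Gaussian-small boundary terms.

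One point needs care. Theorem \ref{thm5.2} does \emph{not} apply to $\phi$ directly: its equation $(\partial_t+\Delta)\phi=2s\phi'+[3n-4+(n-1)s^{-2}]\phi$ has a first-order term, whereas Theorem \ref{thm5.2} treats equations of the form $(\partial_t+\Delta)u=f(s)u$. The paper removes the drift by setting $\phi_1:=(V^0)^{-1}\phi$, checks that $\phi_1$ satisfies \eqref{laps}, and then invokes Theorem \ref{thm5.2} on $\phi_1$. Consequently the correct bounds are $|\phi_1|\lesssim r$, $|\phi_1'|\lesssim 1$ (equivalently $|\phi|\lesssim r^2$, $|\phi'|\lesssim r$), not $|\phi|\lesssim r$, $|\phi'|\lesssim 1$ as you wrote. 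This is harmless numerically because the actual bulk $Q$-coefficient from your own formula for $\mathcal{M}(t)$ is $s^{-1}|\phi|+|\phi'|$, not $|\phi|+s|\phi'|$; with the right bounds this gives the needed $O(r)$. Also, you do not need $\theta\lesssim r^{-\eta}$ here: once $\theta<T'$ is fixed, the Gaussian factor $\theta^{-n/2}\exp(-d_{ab}^2r^2/(c\theta))$ is increasing in $\theta$ on $(0,T')$ for $r$ large, hence bounded by its value at $T'$ and thus super-polynomially small in $r$.
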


\begin{proof}
Note that 
\[ \phi(s, t)=\cosh s\varphi(\frac{s}{r}, t) \]
implies 
\[ \phi'(s, t)=\cosh sr^{-1}\varphi'(\frac{s}{r}, t)+\sinh s\varphi(\frac{s}{r}, t), \]
where we omit the subscript $\theta$.
From Definition \ref{defc0}, we write
\[ M_{C^0}(g_t,\varphi_{\theta}(\cdot, t), r)r\int_{0.9}^{1.1}\varphi_{\theta}(l, t)dl=I+II+III, \]
where
\begin{align*}
I=&\int_{\partial A(0, 0.9r, 1.1r)}\cosh s\varphi(\frac{s}{r}, t)(\nu^i\nu^j-b^{ij})e_{ij}d\mu_b,\\
    II=& \int_{A(0, 0.9r, 1.1r)}\left[ \cosh s\varphi'\left( \frac{s}{r}, t \right)r^{-1}+\varphi\left( \frac{s}{r}, t \right)\left(n\sinh s +\frac{n-2}{\sinh s}\right) \right]b^{ij}e_{ij}d\mu_b\\
    =& \int_{A(0, 0.9r, 1.1r)}\left[\phi'(s, t)+\left((n-1)\frac{\sinh s}{\cosh s}+\frac{n-2}{\sinh s\cosh s}\right)\phi(s, t)\right]tr_bed\mu_b,\\
    III=& \int_{A(0, 0.9r, 1.1r)}\left[ \varphi\left(\frac{s}{r}, t\right)\left(\frac{1}{\sinh s}-\sinh s\right)-\cosh s\varphi'\left( \frac{s}{r}, t \right)r^{-1} \right]\frac{x^ix^j}{\sinh^2 s}e_{ij}d\mu_b\\
    =& \int_{A(0, 0.9r, 1.1r)}[-\phi'(s, t)+(\sinh s \cosh s)^{-1}\phi(s, t)](\sinh s)^{-2}x^ix^je_{ij}d\mu_b.
\end{align*}
By the normalized $b$-flow equation \eqref{nbf1}
\[ \partial_t e_{ij}=\Delta e_{ij}+2e_{ij}-2tr_beb_{ij}+Q, \]
where 
\[ Q=e*e+De*De+D(e*De), \]
let 
\[u(s)=(n-1)\frac{\sinh s}{\cosh s}+\frac{n-2}{\sinh s\cosh s}=\frac{\sinh s}{\cosh s}+(n-2)\frac{\cosh s}{\sinh s}\] and we can calculate that
\begin{align*}
    &\frac{d}{dt}II=\int_{A(0, 0.9r, 1.1r)}[(\partial_t\phi'+u\partial_t\phi)tr_be+(\phi'+u\phi)b^{ij}\partial_te_{ij} ]d\mu_b\\
    =&\int_{A(0, 0.9r, 1.1r)}[(\partial_t\phi'+u\partial_t\phi)tr_be+(\phi'+u\phi)b^{ij}(\Delta e_{ij}+2e_{ij}-2tr_beb_{ij}+Q) ]d\mu_b\\
    =& \int_{A(0, 0.9r, 1.1r)}[(\partial_t+\Delta)\phi'+u(\partial_t+\Delta)\phi+2(1-n)\phi'+2(1-n)u\phi\\
    &\quad\quad\quad\quad\quad\quad  +\phi \Delta u+2\phi'u']tr_bed\mu_b+C\\
    =& \int_{A(0, 0.9r, 1.1r)}[(\partial_t+\Delta)\phi'+u(\partial_t+\Delta)\phi+2\phi'(u'+1-n)\\
    &\quad\quad\quad\quad\quad\quad  +\phi(\Delta u+2(1-n)u)]tr_bed\mu_b+C ,
\end{align*}
where 
\begin{align*}
    C=&\int_{A(0, 0.9r, 1.1r)}(\phi'+u\phi)Q d\mu_b\\
    &+\int_{\partial A(0, 0.9r, 1.1r)}[(\phi'+u\phi)b^{ij}D_{\nu}e_{ij}-D_{\nu}(\phi'+u\phi)tr_be]d\mu_b.
\end{align*}

Let 
\[ X=\frac{x}{\sinh s}=\frac{x^i\partial_i}{\sinh s}, \]
\[ f(s, t)=-\phi'+(\sinh s \cosh s)^{-1}\phi \]
And direct computation gives
\begin{align*}
    \frac{d}{dt}III
     =\int_{A(0, 0.9r, 1.1r)}&[(-\partial_t\phi'+(\sinh s \cosh s)^{-1}\partial_t\phi)(\sinh s)^{-2}x^ix^je_{ij}\\
     &+f(\sinh s)^{-2}x^ix^j(\Delta e_{ij}+Q)\\
    &+2f(\sinh s)^{-2}x^ix^je_{ij}-2ftr_be ]d\mu_b.
\end{align*}

 We obtain by integration by parts
\begin{align*}
    &\int_{A(0, 0.9r, 1.1r)}f(\sinh s)^{-2}x^ix^j\Delta e_{ij}d\mu_b=\int_{A(0, 0.9r, 1.1r)}fX^iX^j\Delta e_{ij}d\mu_b\\
    =& \int_{A(0, 0.9r, 1.1r)}b^{kl}D_kD_l(fX^iX^j)e_{ij}d\mu_b\\
    &+\int_{\partial A(0, 0.9r, 1.1r)}[fX^iX^jb^{kl}D_le_{ij}\nu_k-b^{kl}D_l(fX^iX^j)\nu_ke_{ij}]d\mu_b.
\end{align*}
Since $D_kx=\cosh s\partial_k$, we have
\[ D_kX=(\sinh s)^{-1}\cosh s\partial_k-(\sinh s)^{-3}\cosh s\rho^{-2}x^kx, \]
\[ D_kX^i=(\sinh s)^{-1}\cosh s\delta_{ik}-(\sinh s)^{-3}\cosh s\rho^{-2}x^kx^i. \]
So
\begin{align*}
    &D_lD_kX=D_l [(\sinh s)^{-1}\cosh s\partial_k-(\sinh s)^{-3}\cosh s\rho^{-2}x^kx]\\
    =& -\partial_l((\sinh s)^{-3}\cosh s\rho^{-2}x^k)x-(\sinh s)^{-3}(\cosh s)^2\rho^{-2}x^k\partial_l\\
    &+\partial_l((\sinh s)^{-1}\cosh s)\partial_k+(\sinh s)^{-1}\cosh s\Gamma_{lk}^i\partial_i
\end{align*}
and 
\begin{align*}
    b^{kl}D_lD_kX^i
    =&  (1-n)(\sinh s)^{-3}(\cosh s)^2x^i.
\end{align*}
Since
\begin{align*}
   & b^{kl}D_lfD_kX^i=\rho^2f'\rho^{-2}(\sinh s)^{-1}x^lD_lX^i=0
\end{align*}
and
\begin{align*}
    & b^{kl}D_kX^iD_lX^j\\
    =& \rho^{2}[(\sinh s)^{-1}\cosh s\delta_{ik}-(\sinh s)^{-3}\cosh s\rho^{-2}x^kx^i][(\sinh s)^{-1}\cosh s\delta_{jk}-(\sinh s)^{-3}\cosh s\rho^{-2}x^kx^j]\\
    =& -(\sinh s)^{-4}(\cosh s)^2x^ix^j+(\sinh s)^{-2}(\cosh s)^2b^{ij},
\end{align*}
we obtain
\begin{align*}
    &b^{kl}D_kD_l(fX^iX^j)e_{ij}\\
    =&\Delta fX^iX^je_{ij}+4b^{kl}D_lfD_kX^iX^je_{ij}+2fb^{kl}D_kD_lX^iX^je_{ij}+2fb^{kl}D_kX^iD_lX^je_{ij}\\
    =& \Delta fX^iX^je_{ij}-2nf(\sinh s)^{-4}(\cosh s)^2x^ix^je_{ij}+2f(\sinh s)^{-2}(\cosh s)^2tr_be.
\end{align*}

Hence, we get
\begin{align*}
    \frac{d}{dt}III
     =\int_{A(0, 0.9r, 1.1r)}&[(-\partial_t\phi'+(\sinh s \cosh s)^{-1}\partial_t\phi+\Delta f+2f\\
     &
     -2nf(\sinh s)^{-2}(\cosh s)^2)(\sinh s)^{-2}x^ix^je_{ij}\\
     &+2f(\sinh s)^{-2}tr_be]d\mu_b+D,
\end{align*}
where 
\begin{align*}
    D=&\int_{A(0, 0.9r, 1.1r)}fX^iX^jQd\mu_b\\
    &+\int_{\partial A(0, 0.9r, 1.1r)}[fX^iX^jb^{kl}D_le_{ij}\nu_k-b^{kl}D_l(fX^iX^j)\nu_ke_{ij}]d\mu_b.
\end{align*}

Then we write
\begin{align*}
    \frac{d}{dt}(II+III)
     =\int_{A(0, 0.9r, 1.1r)}[Atr_be+B(\sinh s)^{-2}x^ix^je_{ij}]d\mu_b+C+D,
\end{align*}
where 
\begin{align*}
    A=&(\partial_t+\Delta)\phi'+u(\partial_t+\Delta)\phi+2\phi'(u'+1-n)+\phi(\Delta u+2(1-n)u)+2f(\sinh s)^{-2}\\
    =&(\partial_t+\Delta)\phi'+u(\partial_t+\Delta)\phi+\phi'\alpha+\phi\beta,\\
    B=&-\partial_t\phi'+(\sinh s \cosh s)^{-1}\partial_t\phi+\Delta f+2f
     -2nf(\sinh s)^{-2}(\cosh s)^2\\
     =& -(\partial_t+\Delta)\phi'+(\sinh s \cosh s)^{-1}(\partial_t+\Delta)\phi+\phi\gamma+\phi'\eta.
\end{align*}
By our notation, we can calculate that 
\begin{align*}
    \alpha=&2(1-n)+2u'-2(\sinh s )^{-2}\\
    =& 2(1-n)+2(1-n)(\sinh s)^{-2}+2(\cosh s)^{-2},\\
    \beta=& \Delta u+2(1-n)u+2(\sinh s)^{-3}(\cosh s)^{-1}\\
    =& -2(\cosh s)^{-3}\sinh s+(n-1)(\sinh s\cosh s)^{-1}\\
    &+(n-2)(3-n)(\sinh s)^{-3}\cosh s+2(1-n)\sinh s(\cosh s)^{-1}\\
    &+2(1-n)(n-2)(\sinh s)^{-1}\cosh s+2(\sinh s)^{-3}(\cosh s)^{-1},\\
\gamma=& \Delta(\sinh s\cosh s)^{-1}-2n(\sinh s)^{-3}\cosh s+2(\sinh s\cosh s)^{-1}\\
=& 3(1-n)(\sinh s)^{-3}\cosh s+2(\cosh s)^{-3}\sinh s+(3-n)(\sinh s\cosh s)^{-1},\\
\eta =& 2[(\sinh s\cosh s)^{-1}]'+2n(\sinh s)^{-2}(\cosh s)^2-2\\
=& 2(n-1)+2(n-1)(\sinh s)^{-2}-2(\cosh s)^{-2}.
\end{align*}

Thus, we can get
\begin{align*}
    A=&-B+[u+(\sinh s \cosh s)^{-1}](\partial_t+\Delta)\phi+\phi(\gamma+\beta)+\phi'(\alpha+\eta)\\
    =&-B+(n-1)(\sinh s )^{-1}\cosh s(\partial_t+\Delta)\phi\\
    &+\phi [2(\sinh s\cosh s)^{-1}+(-n^2+2n-3)(\sinh s)^{-3}\cosh s+2(1-n)\sinh s(\cosh s)^{-1}\\
    &\quad\quad+2(1-n)(n-2)(\sinh s)^{-1}\cosh s+2(\sinh s)^{-3}(\cosh s)^{-1}]\\
    =& -B+(n-1)(\sinh s )^{-1}\cosh s(\partial_t+\Delta)\phi\\
    &+\phi[-2(n-1)^2(\sinh s)^{-1}\cosh s+2(1-n)(\sinh s)^{-3}(\cosh s)^{-1}\\
     &\quad\quad+(n-3)(1-n)(\sinh s)^{-3}\cosh s]\\
     =&  -B+(n-1)(\sinh s )^{-1}\cosh s[\partial_t+\Delta+2(1-n)-2(\sinh s)^{-2}(\cosh s)^{-2}-(n-3)(\sinh s)^{-2}]\phi\\
     =&-B+(n-1)(\sinh s )^{-1}\cosh s[\partial_t+\Delta+2(1-n)+2(\cosh s)^{-2}-(n-1)(\sinh s)^{-2}]\phi.
\end{align*}
Since by assumption 
\begin{align*}
    (\partial_t+\Delta)\phi=[2(n-1)-2(\cosh s)^{-2}+(n-1)(\sinh s)^{-2}]\phi,
\end{align*}
and
\begin{align*}
    &(\partial_t+\Delta)\phi'=[(\partial_t+\Delta)\phi]'+(n-1)(\sinh s)^{-2}\phi',
\end{align*}
then we can calculate
\begin{align*}
    B&=-[(\partial_t+\Delta)\phi]'-(n-1)(\sinh s)^{-2}\phi'+(\sinh s \cosh s)^{-1}(\partial_t+\Delta)\phi+\phi\gamma+\phi'\eta\\
    &=\phi'E+\phi F,
\end{align*}
where 
\begin{align*}
    E=& -2(n-1)+2(\cosh s)^{-2}-(n-1)(\sinh s)^{-2}-(n-1)(\sinh s)^{-2}+\eta\\
    =& 0,\\
    F=& -4(\cosh s)^{-3}\sinh s+2(n-1)(\sinh s)^{-3}\cosh s+2(n-1)(\sinh s \cosh s)^{-1}\\
    &-2(\sinh s)^{-1}(\cosh s)^{-3}+(n-1)(\sinh s)^{-3}(\cosh s)^{-1}+\gamma\\
    =&0.
\end{align*}
Finally, we obtain 
\[ A=-B=0. \]

By Theorem \ref{thm3.7}, for $t\in [0, T_*]$, $|e(t)|=O(e^{-s\tau})$ and by Theorem \ref{thm5.2}, for $(x, t)\in \partial A(0, 0.9r, 1.1r)\times[0, \theta]$, we have 
\begin{align*}
    &\int_0^{\theta}\frac{d}{dt}Idt=I\bigg|_0^{\theta}=\int_{\partial A(0, 0.9r, 1.1r)}[\phi(s,\theta)e_{ij}(\theta)-\phi(s, 0)e_{ij}(0)](\nu^i\nu^j-b^{ij})d\mu_b,\\
    \leq & C(n, \varphi)\theta^{-\frac{n}{2}}\exp\left( -\frac{d_{ab}^2r^2}{c\theta} \right)e^{1.1(2n-\tau-1)r}.
\end{align*} 

Next we estimate $C$ and $D$.
By the expression of $Q$, we observe that
\begin{align*}
    &\int_{A(0, 0.9r, 1.1r)}[\phi'+u\phi]Qd\mu_b\\
    \leq &C(n) \int_{A(0, 0.9r, 1.1r)}(\phi'+u\phi)(|e|^2+|De|^2+D(e*De))d\mu_b\\
    \leq & C(n)\int_{A(0, 0.9r, 1.1r)}[|\phi'+u\phi|(|e|^2+|De|^2)+|\phi''+u\phi'+u'\phi||e||De|]d\mu_b\\
    &+C(n)\int_{\partial A(0, 0.9r, 1.1r)}|\phi'+u\phi||e||De|d\mu_b,
\end{align*}
and
\begin{align*}
   & \int_{\partial A(0, 0.9r, 1.1r)}[(\phi'+u\phi)b^{ij}D_{\nu}e_{ij}-D_{\nu}(\phi'+u\phi)tr_be]d\mu_b\\
    \leq& C(n)\int_{\partial A(0, 0.9r, 1.1r)}[|\phi'+u\phi||De|+|\phi''+u\phi'+u'\phi||e|]d\mu_b,
\end{align*}
which implies that 
\begin{align*}
    C
    \leq & C(n)\int_{A(0, 0.9r, 1.1r)}[|\phi'+u\phi|(|e|^2+|De|^2)+|\phi''+u\phi'+u'\phi||e||De|]d\mu_b\\
    &+C(n)\int_{\partial A(0, 0.9r, 1.1r)}[|\phi'+u\phi||e||De|+|\phi'+u\phi||De|+|\phi''+u\phi'+u'\phi||e|]d\mu_b.
\end{align*}
Similarly, we have
\begin{align*}
    D\leq& C(n)\int_{A(0, 0.9r, 1.1r)}[|f|(|e|^2+|De|^2)+|f'||e||De|] d\mu_b\\
    &+C(n)\int_{\partial A(0, 0.9r, 1.1r)}[|f||e||De|+|f||De|+|f'||e|] d\mu_b.
\end{align*}

Therefore, by the expression of $u$ and $f$, we get
\begin{align*}
    C+D\leq & C(n)\int_{A(0, 0.9r, 1.1r)}[(|\phi'|+|\phi|)(|e|^2+|De|^2)+(|\phi''|+|\phi'|+|\phi|e^{-2s})|e||De|]d\mu_b\\
    &+C(n)\int_{\partial A(0, 0.9r, 1.1r)}[(|\phi'|+|\phi|)|De|+(|\phi''|+|\phi'|+|\phi|e^{-2s})|e|]d\mu_b.
\end{align*}
By Theorem \ref{thm5.2}, we see that on $A(0, 0.9r, 1.1r)\times (0, \theta)$,
\begin{align*}
    |\phi'|+|\phi|&\leq C(n, \varphi)e^{s};\\
    |\phi''|+|\phi'|+|\phi e^{-2s}|&\leq C(n, \varphi)e^{s};
\end{align*}
and on $\partial A(0, 0.9r, 1.1r)\times (0, \theta)$,
\begin{align*}
    |\phi'|+|\phi|&\leq C(n, \varphi)\theta^{-\frac{n+1}{2}}\exp\left( -\frac{d_{ab}^2r^2}{c\theta} \right)e^{1.1nr};\\
    |\phi''|+|\phi'|+|\phi e^{-2s}|&\leq C(n, \varphi)\theta^{-\frac{n+2}{2}}\exp\left( -\frac{d_{ab}^2r^2}{c\theta} \right)e^{1.1nr}.
\end{align*}
Hence, we get
\begin{align*}
    C+D\leq &C(n, \varphi)\int_{A(0, 0.9r, 1.1r)}e^s(|e|^2+|De|^2+|e||De|)d\mu_b\\
    &+C(n, \varphi)\theta^{-\frac{n+2}{2}}\exp\left( -\frac{d_{ab}^2r^2}{c\theta} \right)e^{1.1nr}\int_{\partial A(0, 0.9r, 1.1r)}(|De|\theta^{\frac{1}{2}}+|e|)d\mu_b.
\end{align*}

Therefore, we obtain
\begin{align*}
    &\int_0^{\theta}\left| \frac{d}{dt}\left[M_{C^0}(g_t, \varphi_{\theta}(\cdot, t), r)r\int_{0.9}^{1.1}\varphi_{\theta}(l, t)dl\right] \right|dt\\
    \leq & C(n, \varphi)\theta^{-\frac{n}{2}}\exp\left( -\frac{d_{ab}^2r^2}{c\theta} \right)e^{1.1(2n-\tau-1)r}\\
    &+ C(n, \varphi)\int_0^{\theta}\int_{A(0, 0.9r, 1.1r)}e^s(|e|^2+|De|^2+|e||De|)d\mu_bdt\\
    &+C(n, \varphi)\theta^{-\frac{n+2}{2}}\exp\left( -\frac{d_{ab}^2r^2}{c\theta} \right)e^{1.1nr}\int_0^{\theta}\int_{\partial A(0, 0.9r, 1.1r)}(|De|\theta^{\frac{1}{2}}+|e|)d\mu_bdt.
\end{align*}

By Theorem \ref{thm3.7}, for $r>r_0$ sufficiently large, we have 
\[ |De(t)|\leq C(n)t^{-1/2}e^{-\tau s}, \]
and by Theorem \ref{thm3.8}, for any $x\in\mathbb{H}^n$, $0<r_1^2<T_*$, 
\begin{equation}\label{del2}
    ||De_t||_{L^2(B(x, r_1)\times(0, r_1^2))}\leq C(n)r_1^{\frac{n}{2}}e^{-\tau s(x)}.
\end{equation}
Then we can get
\begin{align*}
    &\int_0^{\theta}\int_{A(0, 0.9r, 1.1r)}e^s(|e|^2+|e||De|)d\mu_bdt\\
    \leq & C(n)e^{0.9(n-2\tau)r}\theta^{1/2},
\end{align*}
\begin{align*}
    \int_0^{\theta}\int_{\partial A(0, 0.9r, 1.1r)}(|De|\theta^{\frac{1}{2}}+|e|)d\mu_bdt
    \leq C(n)e^{k r}\theta,
\end{align*}
where $k=1.1(n-1-\tau)$ if $(n-1-\tau)>0$; $k=0.9(n-1-\tau)$ if $(n-1-\tau)<0$.
Let $R=\sqrt{\theta}$ and cover $A(0, 0.9r, 1.1r)$ by balls $\{B_b(x_\alpha,R)\}_\alpha$ with uniformly
bounded overlap. Since $R\le \sqrt{T_*}$,
hence
$$
e^{s(y)}e^{-2\tau s(x_\alpha)}
\le C e^{(1-2\tau)s(y)}.
$$
By \eqref{del2},
$$
\int_0^\theta\int_{B_b(x_\alpha,R)} |De|^2\,d\mu_b\,dt
\le
C C_0^2 e^{-2\tau s(x_\alpha)}R^n .
$$
Multiplying by $e^s$ and
summing over $\alpha$, we obtain
$$
\begin{aligned}
\int_0^\theta\int_{A(0, 0.9r, 1.1r)} e^s |De|^2\,d\mu_b\,dt
&\le
C C_0^2
\sum_\alpha e^{(1-2\tau)s(x_\alpha)}R^n \\
&\le
C C_0^2
\int_{A(0, 0.9r, 1.1r)} e^{(1-2\tau)s}\,d\mu_b\\
&\leq CC_0^2e^{0.9(n-2\tau)r} .
\end{aligned}
$$

Finally, we have
\begin{align*}
    &\int_0^{\theta}\left| \frac{d}{dt}\left[M_{C^0}(g_t, \varphi_{\theta}(\cdot, t), r)r\int_{0.9}^{1.1}\varphi_{\theta}(l, t)dl\right] \right|dt\\
    \leq & C(n, \varphi)e^{0.9(n-2\tau)r} ,
\end{align*}
for $\theta$ sufficiently small.
    
\end{proof}

\begin{thm}\label{thm5.6}
    Under the conditions of Theorem \ref{thm5.5},
we have, for $r$ sufficiently large and $0<\theta\le 1$,
$$
\int_0^\theta\left|\frac{d}{dt}
M_{C^0}(g_t,\varphi_\theta(\cdot,t),r)\right|\,dt
\le C r^{-1}e^{0.9(n-2\tau)r}+C r^{-1}e^{\mu_\tau r}\theta,
$$
where
$$
\mu_\tau:=
\max_{s\in[0.9r,1.1r]}\frac{(n-\tau)s}{r}=
\begin{cases}
1.1(n-\tau), & \tau\le n,\\
0.9(n-\tau), & \tau>n,
\end{cases}
$$
\end{thm}

\begin{proof}
 Set
$$
J(t):=\int_{0.9}^{1.1}\varphi_\theta(l,t)\,dl,
$$
$$
M(t):=M_{C^0}(g_t,\varphi_\theta(\cdot,t),r).
$$
and 
$$
N(t):=M(t)\,rJ(t).$$
Hence
$$
M'(t)=\frac{N'(t)}{rJ(t)}-\frac{N(t)}{rJ(t)}\frac{J'(t)}{J(t)}=\frac{N'(t)}{rJ(t)}-M(t)\frac{J'(t)}{J(t)}.
$$
Therefore,
$$
\int_0^\theta |M'(t)|\,dt\le\frac{1}{r\inf_{[0,\theta]}J}
\int_0^\theta |N'(t)|\,dt+\sup_{[0,\theta]}|M(t)|\int_0^\theta
\left|\frac{J'(t)}{J(t)}\right|\,dt .
$$
By Theorem 5.5,
$$
\int_0^\theta |N'(t)|\,dt\le
C e^{0.9(n-2\tau)r}.
$$
It remains to estimate $J'(t)$ and $J(t)$.  Let
$$
\phi(s,t)=\cosh s\,\psi\!\left(\frac{s}{r},t\right),
\qquad
l=\frac{s}{r},
\qquad
\psi(l,t)=\varphi_\theta(l,t).
$$
The equation for $\phi$ is 
$$
(\partial_t+\Delta)\phi
=
\left[
2n-2+(n-1)\operatorname{csch}^2s
-2\operatorname{sech}^2s
\right]\phi .
$$
So we obtain
\begin{align*}
\partial_t\psi
=&
-r^{-2}\psi_{ll}
-r^{-1}
\left[
2\tanh(rl)+(n-1)\coth(rl)
\right]\psi_l\\
&+
\left[
n-2+(n-1)\operatorname{csch}^2(rl)
-2\operatorname{sech}^2(rl)
\right]\psi .
\end{align*}
Consequently,
$$
\begin{aligned}
J'(t)
&=
\frac{d}{dt}\int_{0.9}^{1.1}\psi(l,t)\,dl \\
&=
\left[
-r^{-2}\psi_l
-r^{-1}
\left(
2\tanh(rl)+(n-1)\coth(rl)
\right)\psi
\right]_{0.9}^{1.1}
+
(n-2)\int_{0.9}^{1.1}\psi(l,t)\,dl,
\end{aligned}
$$
together with Theorem \ref{thm5.2}, we have
\begin{align*}
    |J'(t)-(n-2)J(t)|\leq Cr^{-1}\theta^{-(n+1)/2}
\exp\left(-\frac{d_{ab}^2r^2}{c\theta}\right)e^{1.1(n-1)r},
\end{align*}
since $J(\theta)>0$, Gronwall gives 
\[ J(t)\geq C(n, \varphi)>0, \]
for any $0\leq t\leq\theta$.
Thus
$$
\left|
\frac{J'(t)}{J(t)}
\right|
\le
C
+
C r^{-1}\theta^{-(n+1)/2}
\exp\!\left(-\frac{d_{ab}^2r^2}{c\theta}\right)
e^{1.1(n-1)r}.
$$
Integrating in time gives
$$
\int_0^\theta
\left|
\frac{J'(t)}{J(t)}
\right|\,dt
\le
C\theta
+
C r^{-1}\theta^{-(n-1)/2}
\exp\!\left(-\frac{d_{ab}^2r^2}{c\theta}\right)
e^{1.1(n-1)r}\leq C\theta.
$$
Finally, from Definition \ref{defc0}, using
$$
|e|\le C e^{-\tau s},
\qquad
\cosh s\le C e^s,
\qquad
d\mu_b\sim e^{(n-1)s}\,ds,
$$
on $A(0,0.9r,1.1r)$, we have
$$
|M(t)|
\le
C r^{-1}e^{\mu_\tau r}.
$$
Combining the estimates yields
$$
\int_0^\theta |M'(t)|\,dt
\le
C r^{-1}e^{0.9(n-2\tau) r}
+
C r^{-1}e^{\mu_\tau r}\theta,
$$
\end{proof}

\begin{thm}\label{thm5.7}
Let $(M^n, g)$ be a $C^0_{\tau}$-asymptotically hyperbolic manifold with fixed coordinate chart at infinity $\Phi$ and $\tau>\frac{119n}{128}$, $n\geq3$.
    Let $g_0$ be a continuous metric on $\mathbb{H}^n$ such that $g_0=\Phi_*g$ on $A(0, 0.8r, 12r)$ for $r>r_0$ sufficiently large and 
    \[ ||g_0-b||_{L^{\infty}(\mathbb{H}^n)}<\epsilon, \]
    for some $\epsilon<1$.
    Let $g_t=g(t)$ be the normalized $b$-flow as Theorem \ref{thm3.7}.
    Let 
    \begin{equation*}
        \mu_{\tau}=
        \begin{cases}
            1.1(n-\tau),  \quad\quad &\tau\leq n,\\
            0,&\tau\geq n.
        \end{cases}
    \end{equation*}
    Choose $r>r_0$  sufficiently large such that $e^{-r\eta}<T_*$, where $\eta\in (\mu_{\tau}, 0.09(2\tau-n))$ and $T_*$ being as in Theorem \ref{thm3.7}.
    
     Suppose $R_{C^0_{\beta}}(g)(x)\geq-n(n-1)$, for $\beta\in (0, \frac{1}{2})$ and for any $\Phi(x)\in A(0, 0.8r, 12r)$. 
     Let $\varphi, \bar{\varphi}$ be any two  smooth positive functions with support in $(0.9, 1.1)$ and with nonzero integrals over $(0.9, 1.1)$ and $\varphi_{\theta}(l, t)$, $\bar{\varphi}_{\theta}(l, t)$ be defined as \eqref{varphithe} for any $\theta<T_*$.
     
     Then we have, for all $r'\in [\frac{1.1}{0.9}r, 10r]$,  
    \[ M_{C^0}(g, \varphi_{e^{-r'\eta}}(\cdot, 0), r')-M_{C^0}(g, \varphi_{e^{-r\eta}}(\cdot, 0), r)\geq -C(n, \varphi, \tau, \beta, \eta)e^{-\kappa r}, \]
    and
   \[ M_{C^0}(g, \bar{\varphi}_{e^{-r'\eta}}(\cdot, 0), r')-M_{C^0}(g, \varphi_{e^{-r\eta}}(\cdot, 0), r)\geq -C(n, \varphi, \tau, \beta, \eta)e^{-\kappa r}, \]
where \[  \kappa<\min\{0.9(2\tau-n)-10\eta, \eta-\mu_{\tau}\}.\]
\end{thm}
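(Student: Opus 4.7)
The plan is to bound the $C^0$ mass difference by (i) replacing each mass by its mollified counterpart at the appropriate flowed metric via Theorem~\ref{thm5.6}, (ii) using Proposition~\ref{prop51} to compare the mollified masses at a common flow time, and (iii) bridging the residual time gap between the two flow times $r^{-\eta}$ and $(r')^{-\eta}$.

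For step (i), I would apply Theorem~\ref{thm5.6} at scale $r$ with $\theta = r^{-\eta}$ and at scale $r'$ with $\theta = (r')^{-\eta}$ to obtain
\[ |M_{C^0}(g, \varphi_{r^{-\eta}}(\cdot, 0), r) - M_{C^0}(g_{r^{-\eta}}, \varphi, r)| + |M_{C^0}(g, \varphi_{(r')^{-\eta}}(\cdot, 0), r') - M_{C^0}(g_{(r')^{-\eta}}, \varphi, r')| \leq C\, r^{n-2\tau+\eta}, \]
using $r' \leq 10 r$ and the hypothesis $\eta \geq (\tau-1)/2$, which is exactly what makes $r^{n-\tau-1-\eta} \leq r^{n-2\tau+\eta}$. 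For step (ii), I would insert the intermediate quantity $M_{C^0}(g_{(r')^{-\eta}}, \varphi, r)$ and invoke Proposition~\ref{prop51} at the common time $t_\star = (r')^{-\eta}$ (with $\varphi^1 = \varphi^2 = \varphi$), yielding
\[ M_{C^0}(g_{(r')^{-\eta}}, \varphi, r') - M_{C^0}(g_{(r')^{-\eta}}, \varphi, r) \geq -C\, r^{n-2\tau}(r')^{\eta} \geq -C\, r^{n-2\tau+\eta}, \]
where the $\beta$-weak scalar curvature lower bound for $g$ at $x$ is propagated to a lower bound for $g_{t_\star}$ via Theorem~\ref{scalart}.

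For step (iii), the remaining term $M_{C^0}(g_{(r')^{-\eta}}, \varphi, r) - M_{C^0}(g_{r^{-\eta}}, \varphi, r)$ is a comparison at the same scale $r$ but at two different flow times. I would control it by re-running the estimate of Theorem~\ref{thm5.5} over the sub-interval $[(r')^{-\eta}, r^{-\eta}]$ at scale $r$, with the backward-heat test function $\varphi_{r^{-\eta}}(\cdot, \cdot)$ produced by Theorem~\ref{thm5.5}; integrating the derivative only over this sub-interval still gives the bound $Cr^{n-2\tau+\eta}$ from Theorem~\ref{thm5.6}. Combined with a second application of Theorem~\ref{thm5.6} at scale $r$ with $\theta = (r')^{-\eta}$, this reduces the same-scale different-time comparison to a comparison of two backward-heat mollifications of $\varphi$ at the initial-metric level, which in turn is controlled by the $C^0$ bounds on the auxiliary heat solutions from Theorem~\ref{thm5.2}. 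The second inequality, with an arbitrary positive cutoff $\bar{\varphi}$ in place of $\varphi$ on the $r'$-side, follows from the same chain of estimates, since Proposition~\ref{prop51} and Theorem~\ref{thm5.6} both permit arbitrary positive cutoffs.

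I expect step (iii) to be the main obstacle: the non-scale-invariance of the normalized $b$-flow, noted by the author in the introduction, forces a careful balancing of the errors from Theorem~\ref{thm5.6} and Proposition~\ref{prop51}, and it is precisely this balance that dictates the sharp lower bound $\eta \geq (\tau-1)/2$ beyond the large-$r$ requirement $\eta < 2\tau - n$ (the non-emptiness of the interval $[(\tau-1)/2, 2\tau-n)$ being equivalent to the hypothesis $\tau > (2n-1)/3$).
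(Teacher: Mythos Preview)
Your three-step outline is the right shape, but step~(iii) is an artificial obstacle that the paper's decomposition sidesteps entirely, and your proposed handling of it is incomplete.

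The paper telescopes with a different intermediate point. On the $r$-side it does \emph{not} flow all the way to time $r^{-\eta}$ and cutoff $\varphi$; instead it stops at the common time $t_\star=(r')^{-\eta}$ but keeps the backward-heat cutoff $\varphi_{r^{-\eta}}(\cdot,(r')^{-\eta})$ evaluated at that intermediate time. The three pieces are then
\begin{align*}
&\big[M_{C^0}(g,\varphi_{(r')^{-\eta}}(\cdot,0),r')-M_{C^0}(g_{t_\star},\varphi,r')\big]\\
&\quad+\big[M_{C^0}(g_{t_\star},\varphi,r')-M_{C^0}(g_{t_\star},\varphi_{r^{-\eta}}(\cdot,t_\star),r)\big]\\
&\quad+\big[M_{C^0}(g_{t_\star},\varphi_{r^{-\eta}}(\cdot,t_\star),r)-M_{C^0}(g,\varphi_{r^{-\eta}}(\cdot,0),r)\big].
\end{align*}
The first piece is Theorem~\ref{thm5.6} at scale $r'$ with $\theta=(r')^{-\eta}$. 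The third piece is bounded by integrating $|\frac{d}{dt}M_{C^0}(g_t,\varphi_{r^{-\eta}}(\cdot,t),r)|$ only over $[0,t_\star]\subset[0,r^{-\eta}]$, which is dominated by the full integral in Theorem~\ref{thm5.6} at scale $r$ with $\theta=r^{-\eta}$. The middle piece is Proposition~\ref{prop51} with $\varphi^1=\varphi$ and $\varphi^2=\varphi_{r^{-\eta}}(\cdot,t_\star)$; the latter is still a positive smooth function on $(0.9,1.1)$ by the positivity of the heat kernel in Theorem~\ref{thm5.2}, so Proposition~\ref{prop51} applies directly. No ``same scale, different times'' comparison is ever needed.

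By contrast, your step~(iii) asks for a lower bound on $M_{C^0}(g_{(r')^{-\eta}},\varphi,r)-M_{C^0}(g_{r^{-\eta}},\varphi,r)$. After the sub-interval integration you propose, you are left with the residual $M_{C^0}(g_{t_\star},\varphi,r)-M_{C^0}(g_{t_\star},\varphi_{r^{-\eta}}(\cdot,t_\star),r)$: same time, same scale, two different positive cutoffs. The $C^0$ bounds from Theorem~\ref{thm5.2} only tell you each cutoff is $O(1)$; they do not force the \emph{difference} of the two weighted averages to be small or to have a sign, so your reduction ``to a comparison of two backward-heat mollifications at the initial-metric level'' does not close. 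The paper's choice of intermediate cutoff makes exactly this residual vanish.
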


\begin{proof}
Since $r'>r>1$ gives $e^{-r\eta}>e^{-r'\eta}$, we have
\begin{align*}
     &M_{C^0}(g, \varphi_{e^{-r'\eta}}(\cdot, 0), r')-M_{C^0}(g, \varphi_{e^{-r\eta}}(\cdot, 0), r)\\
     =& M_{C^0}(g, \varphi_{e^{-r'\eta}}(\cdot, 0), r')-M_{C^0}(g_{e^{-r'\eta}}, \varphi_{e^{-r'\eta}}(\cdot, e^{-r'\eta}), r')\\
     &+M_{C^0}(g_{e^{-r'\eta}}, \varphi_{e^{-r'\eta}}(\cdot, e^{-r'\eta}), r')-M_{C^0}(g_{e^{-r'\eta}},\varphi_{e^{-r\eta}}(\cdot,e^{-r'\eta}), r)\\
     &+M_{C^0}(g_{e^{-r'\eta}},\varphi_{e^{-r\eta}}(\cdot,e^{-r'\eta}), r)-M_{C^0}(g, \varphi_{e^{-r\eta}}(\cdot, 0), r)\\
     \geq& -\int_0^{{e^{-r'\eta}}}\bigg|\frac{d}{dt}M_{C^0}(g_t, \varphi_{e^{-r'\eta}}(\cdot, t), r')\bigg|dt\\
     & +M_{C^0}(g_{e^{-r'\eta}}, \varphi, r')-M_{C^0}(g_{e^{-r'\eta}},\varphi_{e^{-r\eta}}(\cdot,e^{-r'\eta}), r)\\
     & -\int_0^{{e^{-r'\eta}}}\bigg|\frac{d}{dt}M_{C^0}(g_t, \varphi_{e^{-r\eta}}(\cdot, t), r)\bigg|dt\\
     \geq& -C (r')^{-1}e^{-0.9(2\tau-n)r'}-C (r')^{-1}e^{\mu_\tau r'}e^{-r'\eta}\\
     &-C(e^{r'\eta}e^{0.9(n-2\tau)r}+e^{11nr}e^{-r'\eta\lambda})\\
     & -\int_0^{{e^{-r\eta}}}\bigg|\frac{d}{dt}M_{C^0}(g_t, \varphi_{e^{-r\eta}}(\cdot, t), r)\bigg|dt\\
     \geq& -C(n, \varphi, \tau, \beta, \eta)e^{-\kappa r},
\end{align*}
where, in the second inequality, we have used Theorem \ref{thm5.6} and Proposition \ref{prop51}, and 
\[  \kappa<\min\{0.9(2\tau-n)-10\eta, \eta-\mu_{\tau},\frac{1.1}{0.9}\lambda\eta-11n \},\]
where $\lambda>0$ is as in Proposition \ref{prop51}.
Similarly, we get
\begin{align*}
    &M_{C^0}(g, \bar{\varphi}_{e^{-r'\eta}}(\cdot, 0), r')-M_{C^0}(g, \varphi_{e^{-r\eta}}(\cdot, 0), r)\\
    =&M_{C^0}(g, \bar{\varphi}_{e^{-r'\eta}}(\cdot, 0), r')-M_{C^0}(g_{e^{-r'\eta}}, \bar{\varphi}_{e^{-r'\eta}}(\cdot, e^{-r'\eta}), r')\\
    &+M_{C^0}(g_{e^{-r'\eta}}, \bar{\varphi}_{e^{-r'\eta}}(\cdot, e^{-r'\eta}), r')-M_{C^0}(g_{e^{-r'\eta}},\varphi_{e^{-r\eta}}(\cdot,e^{-r'\eta}), r)\\
     &+M_{C^0}(g_{e^{-r'\eta}},\varphi_{e^{-r\eta}}(\cdot,e^{-r'\eta}), r)-M_{C^0}(g, \varphi_{e^{-r\eta}}(\cdot, 0), r)\\
     \geq& -\int_0^{{e^{-r'\eta}}}\bigg|\frac{d}{dt}M_{C^0}(g_t, \bar{\varphi}_{e^{-r'\eta}}(\cdot, t), r')\bigg|dt\\
     & +M_{C^0}(g_{e^{-r'\eta}}, \bar{\varphi}, r')-M_{C^0}(g_{e^{-r'\eta}},\varphi_{e^{-r\eta}}(\cdot,e^{-r'\eta}), r)\\
     & -\int_0^{{e^{-r'\eta}}}\bigg|\frac{d}{dt}M_{C^0}(g_t, \varphi_{e^{-r\eta}}(\cdot, t), r)\bigg|dt\\
     \geq& -C(n, \varphi, \tau, \beta, \eta)e^{-\kappa r}.
\end{align*}

\end{proof}

\begin{remark}
The proof of Corollary 5.4 in \cite{MR4685089} should be modified; otherwise, $\eta<2$ needs to be required.
\end{remark}

\begin{thm}
Let $(M^n, g)$ be a $C^0_{\tau}$-asymptotically hyperbolic manifold with the chart at infinity $\Phi$ and $\tau>\frac{119n}{128}$, $n\geq3$.
    Let $g_0$ be a continuous metric on $\mathbb{H}^n$  such that $g_0=\Phi_*g$ on $A(0, 0.8r, 12r)$ for $r>r_0$ sufficiently large and 
    \[ ||g_0-b||_{L^{\infty}(\mathbb{H}^n)}<\epsilon, \]
    for some $\epsilon<1$.
    Let $g_t=g(t)$ be the normalized $b$-flow as Theorem \ref{thm3.7}.
    Choose $\eta$ as in Theorem \ref{thm5.7}.
    
     Suppose $R_{C^0_{\beta}}(g)(x)\geq-n(n-1)$, for $\beta\in (0, \frac{1}{2})$ and for any $\Phi(x)\in A(0, 0.8r, 12r)$. 
     Let $\varphi$ be any   smooth positive function with  $supp\varphi\subset\subset(0.9, 1.1)$ and with nonzero integrals over $(0.9, 1.1)$ and $\varphi_{\theta}(l, t)$ be defined as \eqref{varphithe} for any $\theta<T'$.
     Then the limit 
     \[ \lim_{r\to\infty} M_{C^0}(g, \varphi_{e^{-r\eta}}(\cdot, 0), r)\]
exists, and either finite or $+\infty$. Furthermore, 
the limit $\lim_{r\to\infty} M_{C^0}(g, \varphi_{e^{-r\eta}}(\cdot, t), r)$ is finite if and only if the last condition in  Theorem 2.9 of \cite{MR4685089} is satisfied.
\end{thm}

\begin{proof}
    The proof is the same as Lemma 7.1 and Theorem 7.3 of \cite{MR4685089}.
\end{proof}

\textbf{Declarations:} 
\begin{enumerate}
    \item Conflict of interest: We declare that we have no conflict of interest.
\item Data availability: We declare that the data supporting the findings of this study are available within the paper.
\end{enumerate}

\hspace{0.4cm}

\bibliography{ref}

\end{document}